\theoremstyle{plain} \textwidth=430pt \textheight=660pt
\newcommand{\rmnum}[1]{\romannumeral #1}
\newcommand{\Rmnum}[1]{\expandafter\@slowroman #1@}
\newtheorem{theorem}{Theorem}[section]
\newtheorem{lemma}[theorem]{Lemma}
\newtheorem{corollary}[theorem]{Corollary}
\newtheorem{proposition}[theorem]{Proposition}
\newtheorem{example}[theorem]{Example}
\title{Spectral Curve of Periodic Fisher Graphs}
\author{Zhongyang Li\footnote{Department of Pure Mathematics and Mathematical Statistics, University of Cambridge,
Cambridge, UK, CB30WA, z.li@statslab.cam.ac.uk}}
\date{}
\begin{document}
\maketitle
\begin{abstract}
We study the spectral curves of dimer models on periodic Fisher
graphs, obtained from a ferromagnetic Ising model on $\mathbb{Z}^2$. The spectral curve is  defined by the zero locus of the determinant of a modified
weighted adjacency matrix. We prove that either they are disjoint
from the unit torus ($\mathbb{T}^2=\{(z,w):|z|=1,|w|=1\}$) or they
intersect $\mathbb{T}^2$ at a single real point.
\end{abstract}

\section{Introduction}
In this paper we study the spectral curve of periodic, 2-dimensional Fisher graphs, either finite in on direction, and periodic in the other direction (cylindrical graph); or bi-periodic and obtained from a bi-periodic, ferromagnetic Ising model on $\mathbb{Z}^2$ (toroidal graph). To an edge-weighed, cylindrical (resp. toroidal) Fisher graph, one associates its spectral curve $P(z)=0 (resp. P(z,w)=0)$. The real polynomial $P(z) (resp. P(z,w))$ defining the spectral curve arises as the characteristic polynomial of the Kasteleyn operator in the dimer model.

The study of spectral curve for periodic Fisher graphs (which is non-bipartite), is inspired by the work of Kenyon, Okounkov and Sheffield \cite{ko,kos}. They prove that the spectral curve of bipartite dimer models with positive edge weights is always a real curve of a special type, namely it is a Harnack curve. This implies many qualitative and quantitative results about the behavior of bipartite dimer models related to the phase transition.

The \textbf{Fisher graph} we consider in this paper is a graph with
each vertex of the honeycomb lattice replaced by a triangle, see
Figure 1.

\begin{figure}[htbp]
  \centering
\scalebox{0.8}[0.8]{\includegraphics{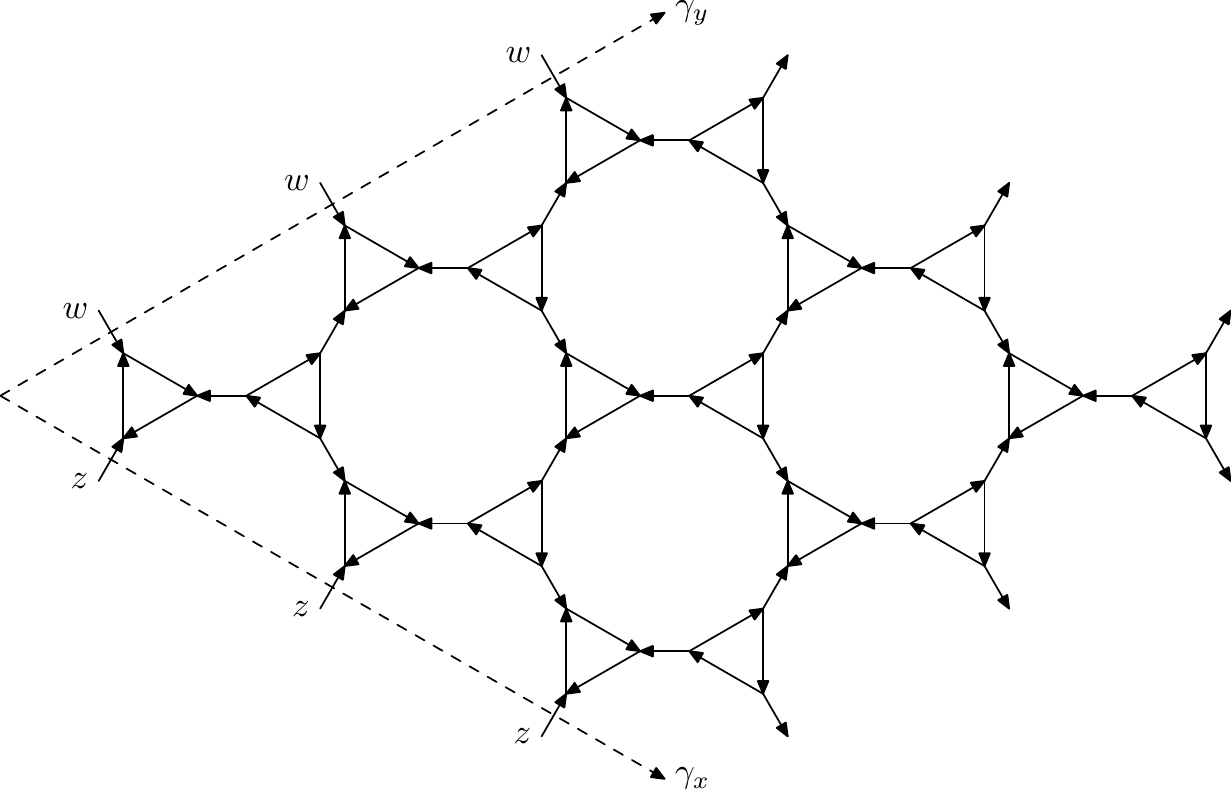}}
   \caption{Fisher graph }
\end{figure}

A planar graph is one which can be embedded into the plane such that edges can intersect only at vertices. Fix an embedding of a planar graph $G$. A
\textbf{clockwise-odd orientation} of $G$ is an orientation of the
edges such that for each face (except the outer face) an odd
number of edges pointing along it when traversed clockwise. For a
planar graph, such an orientation always exists \cite{ka2}. The
Kasteleyn matrix corresponding to such a graph is a
$|V(G)|\times|V(G)|$ skew-symmetric matrix $K$ defined by
\begin{align*}
K_{u,v}=\left\{\begin{array}{cc}W(uv)&{\rm if}\ u\sim v\ {\rm and}\
u\rightarrow v \\-W(uv)&{\rm if}\ u\sim v\ {\rm and}\ u\leftarrow
v\\0&{\rm else}.
\end{array}\right.
\end{align*}
where $W(uv)>0$ is the weight associated to the edge $uv$.

Now let $G$ be a $\mathbb{Z}^2$-periodic planar graph. By this we
mean $G$ is embedded in the plane so that translations in
$\mathbb{Z}^2$ act by weight-preserving isomorphisms of $G$ which
map each edge to an edge with the same weight. Let $G_n$ be the
quotient graph $G/(n\mathbb{Z}\times n\mathbb{Z})$. It is a finite
graph on a torus. Let $\gamma_{x,n}$(resp.\ $\gamma_{y,n}$) be a path in the
dual graph of $G_n$ winding once around the torus
horizontally(resp.\ vertically). Let $E_H$ (resp.\ $E_V$) be the set of edges crossed
by $\gamma_{x}$(resp.\ $\gamma_y$). We give a \textbf{crossing orientation}
for the toroidal graph $G_n$ as follows. We orient all the edges of
$G_n$ except for those in $E_H\cup E_V$. This is possible since no
other edges are crossing. Then we orient the edges of $E_H$ as if
$E_V$ did not exist. Again this is possible since $G-E_V$ is planar.
To complete the orientation, we also orient the edges of $E_V$ as if
$E_H$ did not exist.

Let $K_1$ be a Kasteleyn matrix for the graph $G_1$. Given any
parameters $z, w$, we construct a matrix $K(z,w)$ as follows. Let
$\gamma_{x,1}$ and $\gamma_{y,1}$ be the paths introduced as above.
Multiply $K_{u,v}$ by $z$ if the orientation on that edge is
from $u$ to $v$, and multiply $K_{u,v}$ by $\frac{1}{z}$ if the orientation is from $v$ to $u$, and
similarly for $w$ on $\gamma_y$. Define the \textbf{characteristic
polynomial} $P(z,w)=\det K(z,w)$. The \textbf{spectral curve} is
defined to be the locus $P(z,w)=0$.

We also discuss cylindrical graphs in this paper. Assume we have a
Fisher graph, periodic in the $z$ direction, finite in the $w$ direction, by
removing all $w$-edges and $\frac{1}{w}$ edges. An example of such a graph is shown in
Figure 2.

\begin{figure}[htbp]
  \centering
\scalebox{0.8}[0.8]{\includegraphics{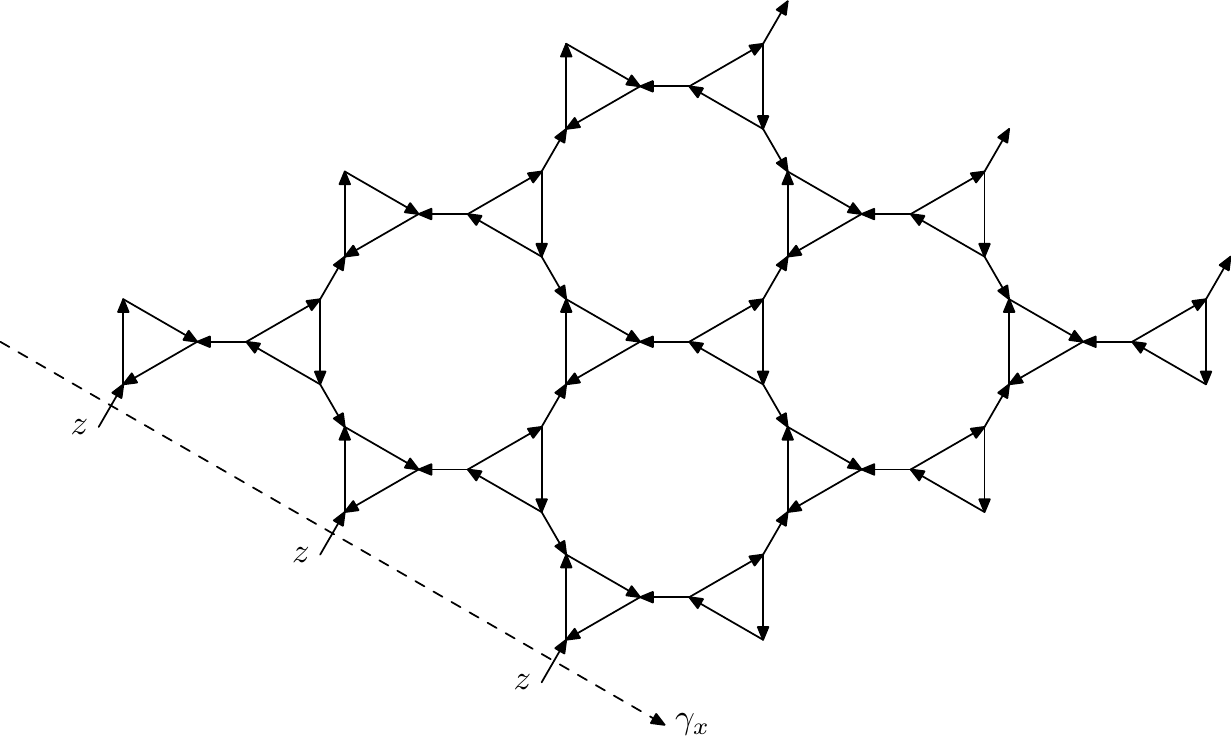}}
   \caption{Fisher graph on a cylinder}
\end{figure}

Assume it is $m\times n$, meaning that it has width $m$ with respect
to $z$, but period $n$ with respect to $w$. We embed this graph into
a cylinder. Let $K(z)$ be the corresponding weighted adjacency
matrix with an orientation shown in Figure 2. $K(z)$ is obtained
from $K(z,w)$ by giving all edges crossed by $\gamma_y$ weight 0. Let $P(z)=\det K(z)$.

Kenyon and Okounkov (\cite{ko}) proved that the spectral curves of
any periodic bipartite graph with positive edge weights are
\textbf{Harnack curves} \cite{mik,mr}, whose intersection with
$\mathbb{T}_{x,y}=\{(z,w):|z|=e^x,|w|=e^y\}$ can only be:
$\rmnum{1}$.~no intersection; $\rmnum{2}$.~a pair of conjugate
points, each of which is of multiplicity 1; $\rmnum{3}$.~a single
real zero of multiplicity 2 (real node). Since the Fisher graph we consider in this paper is
non-bipartite, no previous result is known. We prove the following
theorems

\begin{theorem}
Consider a positively-weighted cylindrical Fisher graph, which is periodic in one direction, and finite on the other direction. Its quotient graph under the translation can be embedded into a cylinder, as illustrated in Figure 2. The intersection of $P(z)=0$
and $\mathbb{T}$ is either empty or a single real point. \end{theorem}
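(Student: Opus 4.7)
The plan rests on three pillars: a symmetry of $K(z)$ that converts the problem to real zeros of a real polynomial on a compact interval, a non-negativity statement for $P$ on $\mathbb{T}$, and a positivity-based restriction of those zeros tailored to the Fisher graph. The first pillar is a \emph{symmetry reduction}: the skew-symmetry of $K_1$ combined with the twist rule gives $K(z)^T = -K(1/z)$, since for an edge on $\gamma_x$ oriented $u \to v$, transposition swaps the $(u,v)$ and $(v,u)$ entries, which carry the weights $z$ and $z^{-1}$ respectively. Taking determinants, and using that $N = |V(G_1)|$ is even, gives $P(z) = P(1/z)$. Since $K(z)$ has real coefficients, $P(e^{i\theta})$ is real and even in $\theta$, so $P(z) = R(u)$ for a real polynomial $R$ in $u = z + z^{-1}$. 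Zeros of $P$ on $\mathbb{T}$ correspond to real zeros of $R$ in $[-2, 2]$: the endpoints $u = \pm 2$ correspond to the single real points $z = \pm 1$, while an interior $u \in (-2, 2)$ corresponds to a conjugate pair $\{e^{\pm i\theta_0}\}$ of non-real points on $\mathbb{T}$. The claim becomes: $R$ has at most one zero in $[-2, 2]$, and any such zero is located at an endpoint.

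\emph{Non-negativity on $\mathbb{T}$.} On $|z|=1$ the identity $\overline{K(z)} = K(1/z) = -K(z)^T$ shows $K(z)^* = -K(z)$, so $K(z)$ is skew-Hermitian and $P(z) = \det K(z)$ is real on $\mathbb{T}$. I would strengthen this to $P(z) \geq 0$ on $\mathbb{T}$ by exhibiting a factorization $P(z) = C \cdot |\det N(z)|^2$ there, with $C > 0$ a constant and $N(z)$ an auxiliary matrix of smaller size obtained from $K(z)$ by a Schur-complement / transfer-matrix elimination along the finite $w$-direction. The finite-width structure of the cylinder is essential: it allows the elimination to terminate with a manageable residual matrix, unlike the bi-periodic torus case. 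Granted $P \geq 0$ on $\mathbb{T}$, any interior zero of $R$ in $(-2, 2)$ must be of even multiplicity.

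\emph{Excluding remaining zeros.} To eliminate even-multiplicity interior zeros (and extra boundary zeros) under the positive-weight hypothesis, a concrete route is to factor $P$ through the quadratic matrix pencil $M(z) = z^2 K_+ + z K_0 + K_-$, with $K_+^T = -K_-$ and $K_0$ skew-symmetric, whose eigenvalues come in reciprocal pairs $\{z_j, 1/z_j\}$ and match the zeros of $P$. For positive edge weights, I would argue that at most one eigenvalue $z_j$ lies on $\mathbb{T}$, and that such a $z_j$ must be real (hence $\pm 1$). A Perron--Frobenius-type positivity on a non-negative matrix built from the Fisher graph's triangle and external weights, or a direct cycle-cover argument exploiting the triangular structure, is the expected tool.

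\emph{Main obstacle.} The crux is the non-bipartite nature of the Fisher graph, which precludes the Harnack-curve framework of Kenyon--Okounkov--Sheffield available in the bipartite case. Both the non-negativity of $P$ on $\mathbb{T}$ and the absence of non-real zeros must be established directly, exploiting the triangular block structure of the Fisher Kasteleyn matrix. The cylindrical topology, which permits a finite transfer-matrix-type reduction along the finite direction, is what makes this analysis tractable in the present setting.
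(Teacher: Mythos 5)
Your first pillar is sound: $K(z)^{T}=-K(1/z)$ does hold, so $P(z)=P(1/z)$, $P$ is real on $\mathbb{T}$, and the reduction to real zeros of a polynomial $R(u)$, $u=z+z^{-1}$, on $[-2,2]$ is correct and consistent with the paper's expansion $P(z)=\sum_{j}P_j(z^j+z^{-j})$. But the two pillars that carry the entire content of the theorem are announced, not proved, and neither has a viable route as stated. The factorization $P(z)=C\,|\det N(z)|^2$ on $\mathbb{T}$ via a Schur-complement elimination is asserted with no construction; for the non-bipartite Fisher graph no such squared-modulus factorization is available (that is precisely the obstruction you yourself name), and in the paper the positivity of $P$ at non-real points of $\mathbb{T}$ appears only \emph{after} the theorem, as Corollary 3.2, deduced from it by eigenvalue interlacing --- so any attempt to invoke that positivity as an input is circular. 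Likewise, the ``Perron--Frobenius-type'' exclusion step names no nonnegative matrix to which such a theorem could apply: $K(z)$ is skew-Hermitian with entries of both signs, and the claim ``at most one pencil eigenvalue lies on $\mathbb{T}$, and it is real'' is simply a restatement of the theorem (and, as stated, slightly too strong, since by Lemma 3.3 a zero at $z=1$ has multiplicity $2$, i.e.\ two pencil eigenvalues merge there). In short, after the correct symmetry reduction your proposal contains no argument that actually excludes a conjugate pair $e^{\pm i\theta_0}$ from the spectral curve.

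For contrast, the paper's mechanism is quite different and is worth internalizing: assuming a non-real zero $z_0=e^{i\alpha_0\pi}$, it uses the fundamental-domain formula $P_n(z)=\prod_{u^n=z}P_1(u)$ to transplant the zero to enlarged cylinders; for $\alpha_0$ irrational or rational with odd numerator it picks $\ell_k$ with $z_0^{\ell_k}\to-1$, notes that planarity of the cylinder plus even circumference makes $P_{\ell_k}(-1)=Z_{\ell_k}^2>0$ a squared dimer partition function, and then derives a contradiction from a Taylor expansion of $P_{\ell_k}(e^{i\theta})$ at $\theta=\pi$ together with the uniform combinatorial bound $\sum_{1\le j\le m}|P_j^{(\ell_k)}|\le C^m Z_{\ell_k}^2$, proved by the path-change argument that converts odd-loop configurations to even-loop ones within a single layer, with constants depending only on the finite height $m$ and not on $\ell_k$; the remaining rational case is handled by enlarging by a large prime. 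That uniform-in-circumference estimate is exactly where ``finite in one direction'' enters --- you correctly sensed its importance, but attached it to a transfer-matrix elimination that is never carried out. To repair the proposal you would need either to genuinely construct your $N(z)$ and the pencil positivity statement, or to replace pillars two and three by an estimate of the paper's type.
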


In Figure 1, each edge of the graph is either a side of the triangle (triangular edge), or an edge connecting different triangles (non-triangle edge). We call all the horizontal non-triangle edges in Figure 1, $a$-edges.

\begin{theorem}
Consider  a bi-periodic Fisher
Graph, given edge-orientations as illustrated in Figure 1. Assume all the triangular edges and $a$-edges have weights 1, and all the other edges have weights in (0,1), then
\begin{enumerate}
\item the spectral curve $P(z,w)=0$ is a Harnack curve;
\item the only possible intersection of $P(z,w)=0$ with the unit torus $\mathbb{T}^2=\{(z,w):|z|=1,|w|=1\}$ is a single real point of multiplicity 2.
\end{enumerate}
\end{theorem}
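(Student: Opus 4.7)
My strategy proceeds in three steps, combining a positivity argument on the unit torus with a phase-twisted refinement of Theorem 1.1.

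\emph{Step 1: positivity on the unit torus.} The twisted Kasteleyn matrix $K(z,w)$ satisfies $K(z,w)^T = -K(1/z, 1/w)$, so on $\mathbb{T}^2$ one has $K(z,w)^* = -K(z,w)$; thus $K(z,w)$ is skew-Hermitian there and $P(z,w) = \det K(z,w)$ is real. With the orientation of Figure 1 and the weight constraints (triangular and $a$-edges equal to $1$, the rest in $(0,1)$), I expect the sharper factorization $P(z,w) = |Q(z,w)|^2 \ge 0$ on $\mathbb{T}^2$, where $Q$ is a Pfaffian of $K$ in a fixed spin structure. This is the natural Fisher/Ising correspondence: Fisher graph dimers encode Ising spin configurations on $\mathbb{Z}^2$, and for ferromagnetic couplings the Pfaffians assemble into a non-negative square. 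Any zero of $P$ on $\mathbb{T}^2$ is then an absolute minimum and hence a critical point, so a Taylor expansion automatically forces vanishing order at least $2$.

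\emph{Step 2: locating and counting zeros on $\mathbb{T}^2$.} Next I fix $w = w_0 \in \mathbb{T}$ and view $P(\cdot, w_0)$ as the characteristic polynomial of a phase-twisted cylindrical Fisher graph in which the $w$-edges carry unit-modulus weight $w_0$ rather than being deleted. The matrix remains skew-Hermitian on $\{|z|=1\}$, and I expect the argument behind Theorem 1.1 to extend to this setting and yield: for each $w_0 \in \mathbb{T}$, the polynomial $P(\cdot, w_0)$ has at most one zero on $\{|z|=1\}$, and that zero must be real. Running the same argument with $z$ and $w$ swapped, the intersection $\{P = 0\} \cap \mathbb{T}^2$ is forced into the four real points $\{\pm 1\}\times\{\pm 1\}$. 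A direct evaluation of $P$ at these four points, using the orientation of Figure 1 and the weight restrictions, should leave at most one zero, which proves part~(2) of the theorem with multiplicity exactly $2$.

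\emph{Step 3: the Harnack property.} For part~(1), I invoke the Mikhalkin / Kenyon--Okounkov characterization of Harnack curves: a real plane curve $\{P=0\}$ is Harnack iff its amoeba map is generically two-to-one, equivalently iff every $\mathbb{T}_{x,y} = \{|z|=e^x, |w|=e^y\}$ meets $\{P=0\}$ in at most two points counted with multiplicity. To obtain this upper bound I would extend the phase-twisted argument of Step 2 to arbitrary moduli: for $w_0$ with $|w_0| = e^y$, the corresponding cylindrical matrix on $|z|=e^x$ is no longer skew-Hermitian but is conjugate (by a diagonal gauge in $z$ and $w$) to one that is, and the count of zeros of $P(\cdot, w_0)$ on $|z|=e^x$ should again be at most two. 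The main obstacle, and the core technical step of the whole proof, is precisely this extension of Theorem 1.1: that theorem rests on the $w$-edges being absent, and reinstating them with weight $w_0$ (first of modulus one, then of arbitrary modulus) requires either a refined matrix argument using the persisting Hermitian structure or a new argument tailored to Fisher graphs. A secondary difficulty is the Pfaffian identity of Step 1, which is sensitive to the choice of spin structure and must be reconciled with the specific orientation of Figure 1.
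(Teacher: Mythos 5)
Your plan contains two genuine gaps, one in Step 1 and one (which you partly acknowledge) in Steps 2--3. In Step 1, the claimed factorization $P(z,w)=|Q(z,w)|^2$ with $Q$ a Pfaffian of $K$ does not make sense off the real points: for non-real $(z,w)\in\mathbb{T}^2$ the matrix $K(z,w)$ is skew-Hermitian but \emph{not} skew-symmetric (the entries $K_{u,v}=Wz$ and $K_{v,u}=-W/z$ are not negatives of each other), so no Pfaffian exists, and skew-Hermitianity of an even-order matrix only forces $\det K$ to be real, not nonnegative --- the sign depends on the eigenvalue distribution of $iK$. The paper's route to $P\geq 0$ on $\mathbb{T}^2$ is quite different and genuinely uses the ferromagnetic weight hypotheses: first a duality/correspondence argument (the generalized Fisher correspondence) shows the untwisted partition function dominates, $Z_{00}=\max_{\theta,\tau}Z_{\theta\tau}$, whence $P(1,-1)\geq(Z_{01}+Z_{11})^2>0$ and, by the fundamental-domain-enlargement machinery of Theorem 1.1 adapted to the torus, $P(z,-1)\neq 0$ for all $z\in\mathbb{T}$ (Proposition 4.4); then a continuity/path argument (Lemma 4.5) upgrades this to $P(z,w)\geq 0$ everywhere on $\mathbb{T}^2$, since a negative value would create a zero of $P(\cdot,e^{i\phi})$ with $\phi/\pi$ rational of odd numerator, which after enlarging the fundamental domain contradicts Proposition 4.4. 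Note that even your $|w_0|=1$ twisting in Step 2 is not a routine extension of Theorem 1.1: that proof relies on planarity of the cylinder, where reversing the $\gamma_x$-crossing orientations gives a clockwise-odd orientation and hence $P_{\ell_k}(-1)=Z_{\ell_k}^2$; on the torus this identity fails and must be replaced by the four-spin-structure Pfaffian identities plus the maximality of $Z_{00}$, which is exactly where the Ising/ferromagnetic structure enters.

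The larger gap is Step 3, which you correctly identify as the core difficulty but leave unresolved --- and the proposed repair cannot work as stated: for $|z|=e^x\neq 1$ or $|w_0|\neq 1$ no diagonal gauge conjugation restores skew-Hermitianity (gauge transformations only rescale the determinant by a nonzero constant and cannot move the torus $\mathbb{T}_{x,y}$ back to $\mathbb{T}^2$; indeed the spectral curve genuinely intersects tori with $x,y\neq 0$, so any argument forcing at most one real zero there would prove too much). The paper sidesteps this entirely with the ingredient your proposal is missing: by Dub\'edat's bosonization \cite{d}, the Fisher characteristic polynomial has the same zero locus as the characteristic polynomial $P_C(z,w)$ of a \emph{bipartite} dimer model on the square-octagon lattice with positive weights, to which the Kenyon--Okounkov theorem \cite{ko} applies directly, giving the Harnack property and in particular the bound of at most two zeros (with multiplicity) on every torus $\mathbb{T}_{x,y}$. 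Combining this with $P\geq 0$ on $\mathbb{T}^2$ (so every zero there has multiplicity at least two, and a non-real zero together with its conjugate would give at least four) yields part (2) immediately. Without a substitute for the bipartite correspondence, your Steps 2--3 amount to assuming a strengthened Theorem 1.1 that is not available, so the proposal as written does not close.
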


Our result is very promising. Firstly, it leads to important
properties of the dimer model on cylindrical graphs, such as weak
convergence of Boltzmann measures and convergence rate of
correlations for an infinite, periodic graph with finite width along
one direction, see Proposition 3.4. Secondly, since the dimer model
on the Fisher graph are closely related to the Ising model \cite{mw}
and the vertex model \cite{lk}, our result leads to a quantitative characterization of the 
critical temperature of the arbitrary periodic ferromagnetic, two-dimensional Ising model, as the solution of an algebraic equation.  \cite{li}.

The outline of the paper is as follows.  In Section 2, we  explain the connection between the Kasteleyn operator, characteristic polynomial, spectral curve with the dimer model. In section 3, prove Theorem 1.1, as well as a phase transition characterized by the decay rate of edge-edge correlation for the dimer model on the cylindrical Fisher graph, resulting from Theorem 1.1. In section 4, we prove Theorem 1.2. The proof consists of 2 critical components, one is that $P(z,w)\geq 0$ for any $(z,w)\in\mathbb{T}^2$, the other is an explicit correspondence between the Kasteleyn operator on the Fisher graph, and the Kasteleyn operator in the square-octagon lattice, introduced in \cite{d}.
\\
\\
\textbf{Acknowledgements} It is a pleasure to express the gratitude
to Richard Kenyon for suggesting the problem and for helpful
discussions. The author would like to thank also David Cimasoni, Hugo Duminil-Copin and David Wilson for 
valuable comments, and C$\acute{e}$dric Boutillier for introducing reference \cite{d}.  The author acknowledges support from the EPSRC under grant EP/103372X/1.

\section{Background}

A \textbf{perfect matching}, or a \textbf{dimer cover}, of
a graph is a collection of edges with the property that each vertex
is incident to exactly one edge. A graph is \textbf{bipartite} if
the vertices can be 2-colored, that is, colored black and white so
that black vertices are adjacent only to white vertices and vice
versa.

To a weighted finite graph $G=(V,E,W)$, the weight $W:
E\rightarrow\mathbb{R}^{+}$ is a function from the set of edges to
positive real numbers. We define a probability measure, called the
\textbf{Boltzmann measure} $\mu$ with sample space the set of dimer
covers. Namely, for a dimer cover $D$
\begin{eqnarray*}
\mu(D)=\frac{1}{Z}\prod_{e\in D}W(e)
\end{eqnarray*}
where the product is over all edges present in $D$, and $Z$ is a
normalizing constant called the \textbf{partition function}, defined
to be
\begin{eqnarray*}
Z=\sum_{D}\prod_{e\in D}W(e),
\end{eqnarray*}
the sum over all dimer configurations of $G$.

If we change the weight function $W$ by multiplying the edge weights
of all edges incident to a single vertex $v$ by the same constant,
the probability measure defined above does not change. So we define
two weight functions $W,W'$ to be \textbf{gauge equivalent} if one
can be obtained from the other by a sequence of such
multiplications.

The key objects used to obtain explicit expressions for the dimer
model are \textbf{Kasteleyn matrices}. They are weighted, oriented
adjacency matrices of the graph $G$ defined as in Page 1. 

It is known \cite{ka1,ka2,tes,kos} that for a planar graph with a
clockwise odd orientation, the partition function of dimers
satisfies
\begin{align*}
Z=\sqrt{\det K}.
\end{align*}

Given a Fisher graph with an orientation as illustrated in Figure 1, the quotient graph can be embedded into an $n\times n$ torus. When $n$ is even, if we reverse the orientations of all the edges crossed by $\gamma_x$ and all the edges crossed by $\gamma_y$, the resulting orientation is a crossing orientation. For $\theta,\tau\in\{0,1\}$, given the orientation as in Figure 1, let $K_n^{\theta,\tau}$ be the
Kasteleyn matrix $K_n$ in which the weights of edges in $E_H$ are
multiplied by $(-1)^{\theta}$, and those in $E_V$ are multiplied by
$(-1)^{\tau}$. Using the result proved in \cite{tes}, we can derive that when $n$ is even, the partition
function $Z_n$ of the graph $G_n$ is
\begin{align*}
Z_n=\frac{1}{2}|-\mathrm{Pf}(K_n^{00})+\mathrm{Pf}(K_n^{10})+\mathrm{Pf}(K_n^{01})+\mathrm{Pf}(K_n^{11})|.
\end{align*}

Let $E_m=\{e_1=u_1v_1,...,e_m=u_mv_m\}$ be a subset of edges of
$G_n$. Kenyon \cite{ke2} proved that the probability of these edges
occurring in a dimer configuration of $G_n$ with respect to the
Boltzmann measure $P_n$ is
\begin{align*}
P_n(e_1,...,e_m)=\frac{\prod_{i=1}^{m}W(u_iv_i)}{2Z_n}|-\mathrm{Pf}(K_n^{00})_{E_m^{c}}+\mathrm{Pf}(K_n^{10})_{E_m^{c}}+\mathrm{Pf}(K_n^{01})_{E_m^{c}}+\mathrm{Pf}(K_n^{11})_{E_m^{c}}|
\end{align*}
where $E_m^c=V(G_n)\setminus\{u_1,v_1,...,u_m,v_m\}$, and
$(K_n^{\theta\tau})_{E_m^c}$ is the submatrix of $K_n^{\theta\tau}$
whose lines and columns are indexed by $E_m^c$.

The asymptotic behavior of $Z_n$ when $n$ is large is an interesting
subject. One important concept is the partition function per
fundamental domain, which is defined to be
\begin{eqnarray*}
\lim_{n\rightarrow\infty}\frac{1}{n^2}\log Z_n.
\end{eqnarray*}

Gauge equivalent dimer weights give the same spectral curve. That is
because after Gauge transformation, the determinant is scaled by a
nonzero constant, thus not changing the locus of $P(z,w)$.

A formula for enlarging the fundamental domain is proved in
\cite{ckp,kos}. Let $P_n(z,w)$ be the characteristic polynomial of
$G_n$, and $P_1(z,w)$ be the characteristic polynomial of $G_1$,
then
\begin{eqnarray*}
P_n(z,w)=\prod_{u^n=z}\prod_{v^n=w}P_1(u,v)
\end{eqnarray*}

\section{Graph on a Cylinder}

\subsection{Spectral Curve}

\textbf{Proof of Theorem 1.1} Without loss of generality, assume the
period $n$, or the circumference of the cylinder, is even. Assume
$P(z)=0$ has a non-real zeros $z_0\in\mathbb{T}^2$. Assume
\begin{eqnarray*}
z_0=e^{i\alpha_0\pi},\qquad \alpha_0\in(0,1)\cup(1,2)
\end{eqnarray*}
We classify all the real numbers in $(0,1)\cup(1,2)$ into 3 types
\begin{enumerate}
\item $\alpha_0=\frac{p}{q}$ where $p,q$
 are positive integers with no common factors, $p$ is odd
 \item $\alpha_0$
is irrational
\item $\alpha_0=\frac{p}{q}$ where $p,q$
 are positive integers with no common factors, $p$ is even
 \end{enumerate}
 First let us consider Case 1 and Case 2. There exists a sequence
 $\ell_k\in\mathbb{N}$, such that
 \begin{eqnarray*}
 \lim_{k\rightarrow\infty}z_0^{\ell_k}=-1
 \end{eqnarray*}
 In other words, if we assume
 $z_0^{\ell_k}=e^{\sqrt{-1}\alpha_k\pi}$ where
 $\alpha_k\in[0,2)$, then
 \begin{eqnarray*}
 \lim_{k\rightarrow\infty}\alpha_k=1
 \end{eqnarray*}
 According to the formula of enlarging the fundamental domain,
 \[P(z_0)=0\]
 implies
 \[P_{\ell_k}(z_0^{\ell_k}))=0\qquad \forall k\]
 Since the cylindrical graph is actually planar, if we reverse the orientation of all the edges crossed by $\gamma_x$, we get a clockwise-odd orientation, given that $n$ is even. Hence
 \[P_{\ell_k}(-1)=Z_{\ell_k}^2\]
 where $Z_{\ell_k}$ is the partition function of dimer configurations of the cylinder with circumference $n\ell_k$ and height $m$. Therefore we have
 \begin{eqnarray}
 1&=&\lim_{k\rightarrow\infty}\left|\frac{P_{\ell_k}(z_0^{\ell_k})-P_{\ell_k}(-1)}{P_{\ell_k}(-1)}\right|\\
 &=&\lim_{k\rightarrow\infty}\frac{1}{Z_{\ell_k}^2}|P_{\ell_k}(e^{i\alpha_k\pi})-P_{\ell_k}(e^{i\pi})|\\
 &=&\lim_{k\rightarrow\infty}\frac{1}{Z_{\ell_k}^2}\left|\sum_{t=1}^{\infty}\frac{[\pi(\alpha_k-1)]^{2t}}{(2t)!}\frac{\partial^{2t}P_{\ell_k}(e^{i\theta})}{\partial\theta^{2t}}|_{\theta=\pi}\right|\label{equaltoone}
 \end{eqnarray}
Since
\begin{eqnarray*}
P_{\ell_k}(z)=\sum_{0\leq j\leq m}P_{j}^{(\ell_k)}(z_j+\frac{1}{z^j})
\end{eqnarray*}
where $P_j^{(\ell_k)}$ is the signed sum of loop configurations winding exactly $j$ times around the cylinder (see Lemma 2.1), we have
\begin{eqnarray*}
\frac{\partial^{2t}P_{\ell_k}(e^{i\theta})}{\partial\theta^{2t}}|_{\theta=\pi}=\sum_{1\leq j\leq m}2j^{2t}(-1)^tP_j^{(\ell_k)},
\end{eqnarray*}
and
\begin{eqnarray*}
\lim_{k\rightarrow\infty}\frac{1}{Z_{\ell_k}^2}\left|\sum_{t=1}^{\infty}\frac{[\pi(\alpha_k-1)^{2t}]}{(2t)!}\frac{\partial^{2t}P_{\ell_k}(e^{i\theta})}{\partial\theta^{2t}}|_{\theta=\pi}\right|\\
\leq\lim_{k\rightarrow\infty}\frac{1}{Z_{\ell_k}^2}\sum_{t=1}^{\infty}\frac{[\pi m(\alpha_k-1)]^{2t}}{(2t)!}2\sum_{1\leq j\leq m}|P_j^{(\ell_k)}|
\end{eqnarray*}
To estimate $|P_j^{(\ell_k)}|$, let us divide the $m\times \ell_k n$ cylinder into $\ell_k n$, $m\times 1$ layers. Connecting edges between two layers may be occupied once, unoccupied, or occupied twice(appear as doubled edges). Choose one layer $L_0$, we construct an equivalent class of loop configurations. Two loop configurations are equivalent if they differ from each other only in $L_0$, and coincide on all the other layers and boundary edges of $L_0$.

We claim that for any given equivalent class, there is at least one configuration including only even loops. To see that we choose an arbitrary configuration in that equivalent class including odd loops. Let $\mathcal{T}$ be the set of all triangles in $L_0$ belonging to some loop crossing $L_0$. We choose an odd loop $s_1$. Choose an triangle $\Delta_1\in s_1\cap L_0$. Moving along $L_0$ until we find a triangle $\Delta_2$ belonging to a different non-planar odd loop $s_2$. This is always possible because by Lemma 2.2, any loop configuration including odd loops always has an even number of odd loops, and all the odd loops  are winding once along the cylinder. Change path through $\Delta_1$. Starting from $\Delta_1$, we change the doubled edge configuration to alternating edges along a path in $L_0$, and change the path through all triangles, belonging to even loops, or $s_1$, between $\Delta_1$ and $\Delta_2$, then we change the path through $\Delta_2$. For any even loop between $\Delta_2$ and $\Delta_2$, we change paths through an even number of triangles of that even loop, hence it is  still an even loop. However, for $s_1$ and $s_2$, we change paths through an odd number of triangles, then both of them become even. We can continue this process until we eliminate all the odd loops, because there are always an even number of odd loops in the configuration. An example of such an path change process is illustrated in the following figures.

\begin{figure}[htbp]
  \centering
\scalebox{0.8}[0.8]{\includegraphics{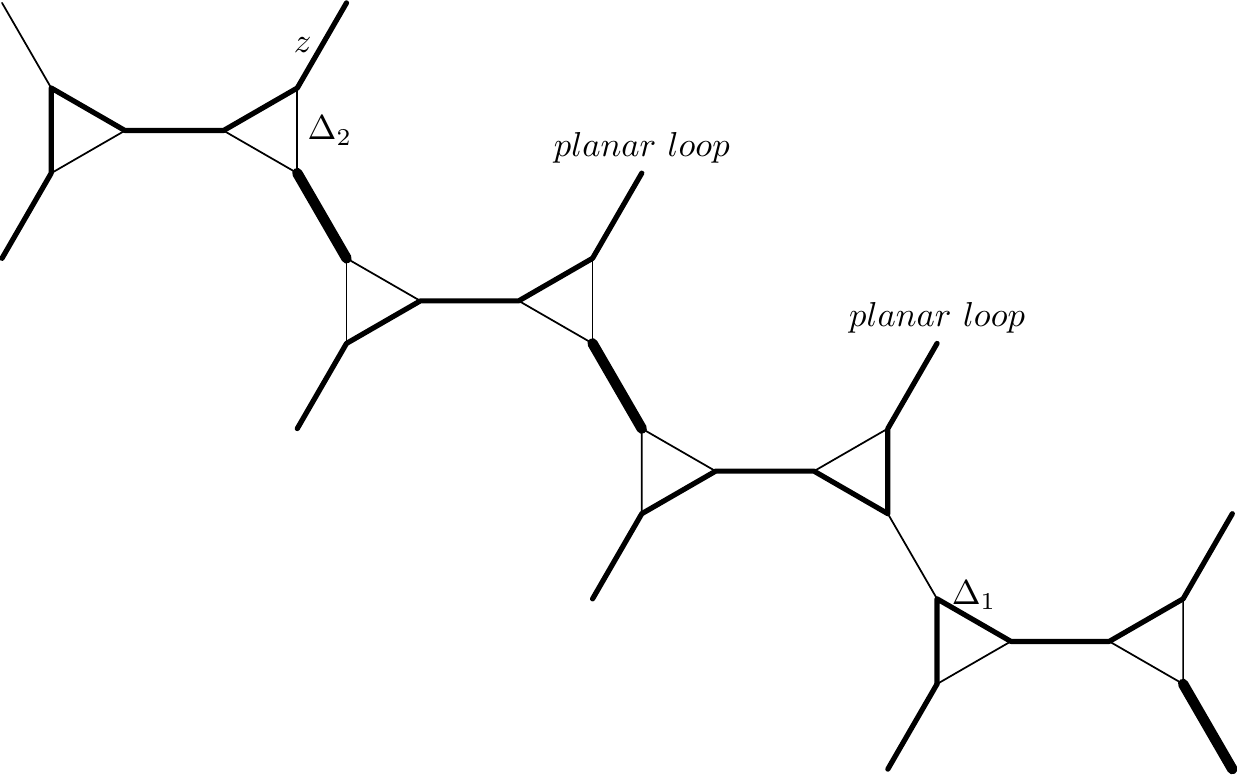}}
   \caption{before path change}
\end{figure}

\begin{figure}[htbp]
  \centering
\scalebox{0.8}[0.8]{\includegraphics{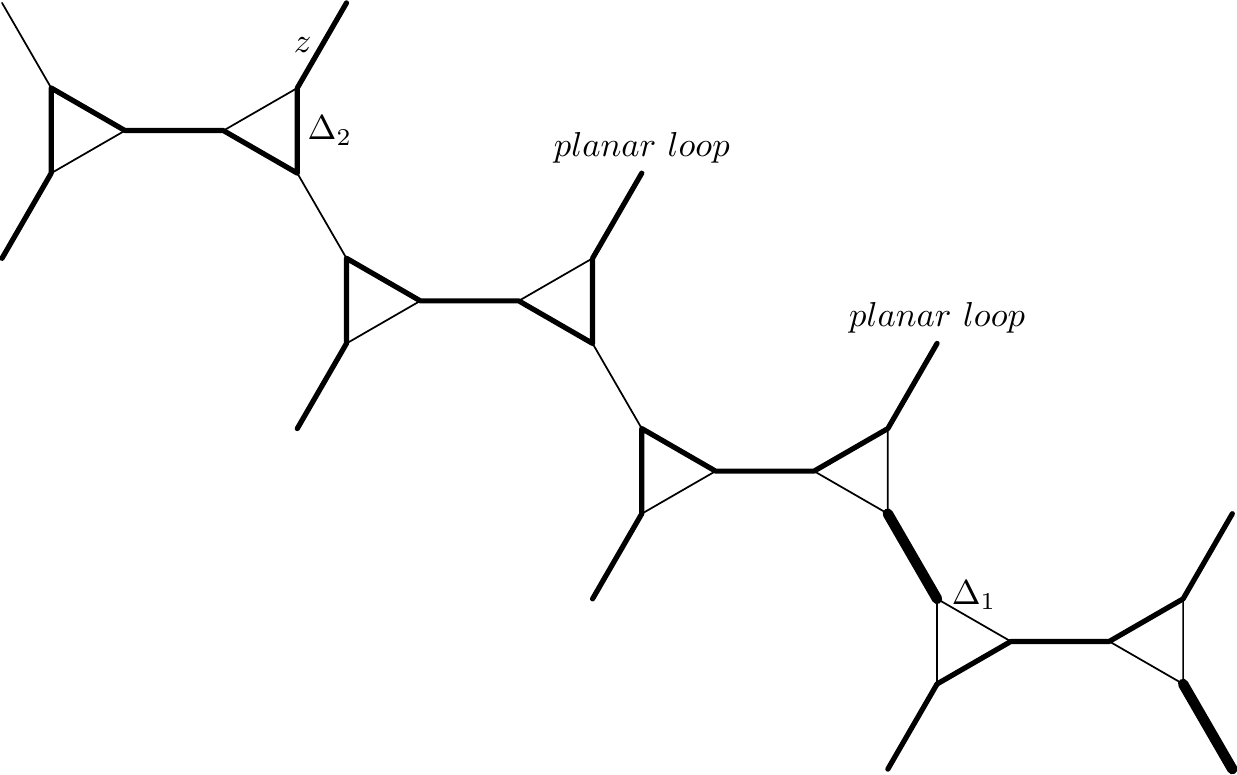}}
   \caption{after path change}
\end{figure}

Since we have at most $2^m$ different configurations in each equivalent class, and each equivalent class has at least one configuration including only even loops, we have
\begin{eqnarray*}
\frac{\#\ \mathrm{of\ loop\ configurations\ including\ odd\ loops}}{\#\ \mathrm{of\ even\ loop\ configurations}}<2^m
\end{eqnarray*}
We have a finite number of different edge weights, each of which is positive, hence the quotient of any two weights is bounded by a constatn $C_1$, then
\begin{eqnarray*}
\sum_{1\leq j\leq m}|P_j^{(\ell_k)}|\leq \mathrm{Partition\ of\ configurations\ including\ nonplanar\ odd\ loops}\\+\mathrm{Partition\ of\ configurations\ with\ only\ even\ loops}\\
\leq(2^mC_1^{6m}+1)\mathrm{Partition\ of\ even\ loop\ configurations}\leq C_2^mZ_{\ell_k}^2
\end{eqnarray*}
As a result
\begin{eqnarray*}
\lim_{k\rightarrow\infty}\frac{1}{Z_{\ell_k}^2}\left|\sum_{t=1}^{\infty}\frac{[\pi(\alpha_k-1)]^{2t}}{(2t)!}\frac{\partial^{2t}P_k(e^{i\theta})}{\partial\theta^{2t}}\right|\\
\leq\lim_{k\rightarrow\infty}\frac{1}{Z_{\ell_k}^2}\sum_{t=1}^{\infty}\frac{[m\pi(\alpha_k-1)]^{2t}}{(2t)!}C_2^mZ_{\ell_k}^2
\end{eqnarray*}
Since $m$ is a constant, and $\lim_{k\rightarrow\infty}\alpha_k=1$, we have
\begin{eqnarray*}
\lim_{k\rightarrow\infty}\sum_{t=1}^{\infty}\frac{[m\pi(\alpha_k-1)]^{2t}}{(2t)!}C_2^m=0
\end{eqnarray*}
which is a contradiction to (\ref{equaltoone}). Hence for all $\alpha_0$ in Case 1 and Case 2, $P(e^{i\alpha_0\pi})\neq 0$.

Now let us consider $\alpha_0$ in Case 3. From the argument above we derive that as long as 
\[|\alpha-1|<\delta,\]
where $\delta$ is a small positive number depending only on $m$,
\[P(e^{i\alpha\pi})\neq 0.\]
Consider $\alpha_0=\frac{p}{q}$, where $p$, $q$ have no common factors and $p$ is even. We claim that as long as the denominator $1$ is sufficiently large, $P(z_0)\neq 0$. To see that, if
\[\frac{1}{q}<\frac{\delta}{2},\]
there exists an integer $k$, such that
\[\left|\frac{1}{\pi}Arg[e^{i k\alpha_0\pi}]-1\right|<\delta.\]
Hence after enlarging the fundamental domain to an $m\times kn$ cylinder, we have
\[P_k(e^{ik\alpha_0\pi})\neq 0,\]
hence
\[P(e^{i\alpha_0\pi})\neq 0.\]
Now we consider $\frac{1}{q}\geq \frac{\delta}{2}$, only finitely may $q$'s satisfy this condition. Let $\ell$ be a prime number satisfying $\ell>\left[\frac{2}{\delta}\right]+1$, then after enlarging the fundamental domain to an $m\times\ell n$ cylinder, the corresponding $\delta$ will not change because it depends only on $m$. For any
\[1\geq \frac{1}{q}\geq\frac{\delta}{2},\]
we have
\[\frac{\delta}{2}>\frac{1}{\ell q}.\]
Let $\ell\alpha_1(\mod 2)=\alpha_0$. Namely, 
\[\alpha_1=\frac{p+2sq}{\ell q}\qquad for\ s=1,2,...,\ell\]
in reduced form. By the previous argument $P(e^{\alpha_1i\pi})\neq 0$,  hence $P_{\ell}(e^{\alpha_0\pi\sqrt{-1}})\neq 0$. Hence when $\alpha_0$ is in Case 3, $P_{\ell}(z_0)\neq 0$ after enlarging the fundamental domain to $m\times n\ell$, where $\ell$ depends only on $m$, and is independent of $\alpha_0$. If $P(z_0)=0$, $z_0$ is not real, then we enlarge the fundamental domain to $m\times n\ell$, where $\ell$ is a big prime number depending only on $m$, we derive that $P_{\ell}(z_0^{\ell})=0$, however this is impossible since $P_{\ell}(z)=0$ can have only real root on $\mathbb{T}$.
  $\Box$

\begin{corollary}For any non-real $z\in\mathbb{T}$, all eigenvalues of $K(z)$ are of
the form $i\lambda_{j},j=1,...,6mn$, where $i$ is the imaginary
unit. $3mn$ of the $\lambda_{j}$'s are positive, the other
$\lambda_{j}$'s are negative.
\end{corollary}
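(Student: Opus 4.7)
The plan is to establish two facts: that $K(z)$ is skew-Hermitian on $|z|=1$, which immediately yields purely imaginary eigenvalues, and then to pin down the signature by a continuity argument that uses $z=\pm 1$ as reference points.

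For the first step, I would verify directly from the definition of $K(z)$ that, for $|z|=1$, $K(z)^{\ast}=-K(z)$. Indeed, for an edge crossed by $\gamma_{x}$ with orientation $u\to v$, one has $K(z)_{u,v}=zW(uv)$ and $K(z)_{v,u}=-z^{-1}W(uv)$; since $z^{-1}=\bar{z}$ on the unit circle and $W(uv)$ is real, $\overline{K(z)_{v,u}}=-zW(uv)=-K(z)_{u,v}$. For edges not crossed by $\gamma_{x}$, the entries are real and antisymmetric, so the same identity holds trivially. Thus $K(z)$ is skew-Hermitian, all its eigenvalues take the form $i\lambda_{j}$ with $\lambda_{j}\in\mathbb{R}$, and Theorem 1.1 guarantees $\lambda_{j}\neq 0$ for every non-real $z\in\mathbb{T}$, so no eigenvalue ever touches the real axis.

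For the sign count, observe that at $z=\pm 1$ the matrix $K(z)$ is actually real and skew-symmetric, so its nonzero eigenvalues come in complex-conjugate pairs $\pm i\mu_{j}$; whenever this matrix is nonsingular it therefore has exactly $3mn$ eigenvalues $i\lambda_{j}$ with $\lambda_{j}>0$ and $3mn$ with $\lambda_{j}<0$. By Theorem 1.1 the intersection of $P(z)=0$ with $\mathbb{T}$ consists of at most one real point, so at least one of $K(1),K(-1)$ is nonsingular; call this point $z_{0}$. Since the eigenvalues of $K(z)$ depend continuously on $z$ and, by Theorem 1.1, no $\lambda_{j}$ can cross $0$ along the open upper semicircle $\{e^{i\theta}:\theta\in(0,\pi)\}$, the number of positive $\lambda_{j}$'s is constant on that arc. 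Taking $z$ along the upper arc toward $z_{0}$, continuity of eigenvalues forces the common value of this constant to equal $3mn$. The identical argument on the open lower semicircle completes the proof.

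The main technical obstacle is simply the careful bookkeeping in the skew-Hermiticity check (getting the conjugation and the $z\leftrightarrow z^{-1}$ convention right), together with the mild subtlety that if one of $\pm 1$ happens to be the real zero permitted by Theorem 1.1, the limiting reference point for the continuity argument must be taken at the other one; both issues are resolved at once by the ``at most one real zero'' statement of Theorem 1.1.
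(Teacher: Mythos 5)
Your proof is correct, but it reaches the signature count by a genuinely different route than the paper. The first step coincides: your skew-Hermiticity check is the paper's observation that $iK(z)$ is Hermitian, and both arguments invoke Theorem 1.1 to rule out zero eigenvalues at non-real $z\in\mathbb{T}$. For the sign count, however, the paper argues by induction on the graph itself: it starts from the planar graph with all $z$-edges deleted, whose real antisymmetric Kasteleyn matrix $K_0$ satisfies $\det K_0=(\mathrm{Pf}\,K_0)^2>0$ because $|\mathrm{Pf}\,K_0|$ is a dimer partition function with positive weights, and then reinstates the $z$-edges one pair of boundary vertices at a time, using Cauchy interlacing for principal submatrices of Hermitian matrices together with $\det K_{r+1}>0$ at non-real $z$ to conclude that each enlargement contributes exactly one positive and one negative eigenvalue. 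You instead fix the full matrix and vary $z$: the signature of $iK(z)$ is locally constant on each open semicircle because no eigenvalue can vanish there, and the anchors $z=\pm1$, where $K(z)$ is real skew-symmetric and hence has balanced spectrum $\pm i\mu_j$ whenever nonsingular, pin down the constant; the ``at most one real zero'' clause of Theorem 1.1 guarantees that at least one anchor is nonsingular. Your continuity argument is arguably more self-contained, since it uses Theorem 1.1 only for the full graph, whereas the paper's induction implicitly requires nonvanishing (indeed positivity) of the determinant of every intermediate matrix $K_{r}(z)$ at non-real $z$ --- a point the paper passes over quickly; on the other hand, the paper's route yields the positivity of $\det K(z)$ along the way and needs no limiting argument at the boundary points. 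Your limiting step is sound as sketched: ordering the $\lambda_j(z)$ continuously along the arc, their limits at $z_0$ are eigenvalues of $K(z_0)$, none of which vanish, so positive branches stay positive and negative branches stay negative, forcing exactly $3mn$ of each on the arc.
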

\begin{proof} From the definition of $K(z)$,  $iK(z)$ is a Hermitian matrix.
We claim that $iK(z)$ has $3mn$ positive and $3mn$ negative
eigenvalues for any non-real $z\in\mathbb{T}$. In fact, for a planar
graph with no $z$ vertices, $K_0$ is a anti-symmetric real matrix
with eigenvalues $\pm i\lambda_{j}$, $1\leq j\leq 3mn-m$, and $\det
K_{0}> 0$, because $|Pf K_0|$ is the partition function of dimer
configurations with positive edge weights, and $\det K_{0}=(Pf
K_{0})^2$  By the previous theorem, every time we add a pair of
boundary vertices connected by a $z$ edge, we have a anti-Hermitian
matrix $K_{r+1}$ of order $6mn-2m+2(r+1)$ with $K_r$ as a principal
minor. By induction hypothesis $iK_r$ has the same number of
positive and negative eigenvalues, the interlacing theorem implies
that $iK_{r+1}$ has at least $3mn-m+r$ positive and $3mn-m+r$
negative eigenvalues. By previous theorem, $\det K_{r+1}>0$ for
non-real $z$, so the other two eigenvalues of $iK_{r+1}$ can only be
one positive and one negative.
\end{proof}

\begin{lemma} If $P(1)=0$, then $\left.\frac{\partial P(z)}{\partial z}\right|_{z=1}=0$. Let $z=e^{\sqrt{-1}\theta}$, then $\left.\frac{\partial P(e^{\sqrt{-1}\theta})}{\partial \theta}\right|=0$. For generic choice of edge weights, $\left.\frac{\partial^2 P(z)}{\partial z^2}\right|_{z=1}\neq 0$, $\left.\frac{\partial^2 P(e^{\sqrt{-1}\theta})}{\partial \theta^2}\right|\neq 0$
\end{lemma}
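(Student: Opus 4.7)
The plan is to combine Corollary 3.2 with Theorem 1.1 to view $\varphi(\theta):=P(e^{\sqrt{-1}\theta})$ as a real-analytic, nonnegative function on $\mathbb{R}$ with an isolated zero at $\theta=0$, and then read off the derivative constraints by elementary calculus.

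First I would upgrade Corollary 3.2 to the pointwise inequality $P(z)\geq 0$ on $\mathbb{T}$: for non-real $z\in\mathbb{T}$, that corollary writes the eigenvalues of $K(z)$ as $\sqrt{-1}\,\lambda_1,\ldots,\sqrt{-1}\,\lambda_{6mn}$ with exactly $3mn$ positive and $3mn$ negative $\lambda_j$. Hence
\[
P(z)=\det K(z)=(\sqrt{-1})^{6mn}\prod_{j}\lambda_j=(-1)^{3mn}\cdot(-1)^{3mn}\prod_{j}|\lambda_j|=\prod_{j}|\lambda_j|\geq 0,
\]
and by continuity the inequality extends to all of $\mathbb{T}$. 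Thus $\varphi$ is real-analytic and nonnegative on $\mathbb{R}$.

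Next, assume $P(1)=0$. By Theorem 1.1 the point $z=1$ is the \emph{only} intersection of $\{P=0\}$ with $\mathbb{T}$, so $\theta=0$ is an isolated zero of $\varphi$. Combined with $\varphi\geq 0$, this makes $\theta=0$ an interior local minimum, so $\varphi'(0)=0$. A direct chain-rule computation gives $\varphi'(0)=\sqrt{-1}\,P'(1)$, yielding $P'(1)=0$ as well; since $P$ has real coefficients this is consistent with $\varphi'(0)$ being real.

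Finally, for the generic nonvanishing of the second derivatives, another chain-rule computation gives $\varphi''(0)=-P''(1)-P'(1)=-P''(1)$. Since $\varphi\geq 0$ with $\varphi(0)=\varphi'(0)=0$, the order of vanishing of $\varphi$ at $0$ is an even integer, and the degenerate case $\varphi''(0)=0$ is equivalent to the single polynomial equation $P''(1)=0$ in the edge weights. To conclude, one must show that $P''(1)$ does not vanish identically on the subvariety $\{P(1)=0\}$ of edge-weight space; equivalently, one exhibits a concrete weight assignment (for instance, a one-parameter deformation of a symmetric weight family) on which $P(1)=0$ holds but $P''(1)\neq 0$. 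The complement of $\{P''(1)=0\}$ then gives the desired Zariski-open ``generic'' set. The main obstacle I expect is precisely this last step: ruling out the accidental identity $P''(1)\equiv 0$ on $\{P(1)=0\}$, either by an explicit computation in a tractable case or by a dimension count in the parameter space of weights.
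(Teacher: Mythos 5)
Your first half is correct, and it takes a genuinely different route from the paper: you deduce $P\geq 0$ on $\mathbb{T}$ from Corollary 3.2 and then read off $\varphi'(0)=0$, hence $P'(1)=0$, from the interior-minimum principle. The paper instead uses the symmetry of the loop expansion, $P(z)=\sum_j P_j\bigl(z^j+z^{-j}\bigr)$, under $z\mapsto 1/z$, which makes $P'(1)=0$ a one-line algebraic identity that holds for \emph{all} weights, without the hypothesis $P(1)=0$ and without invoking the eigenvalue corollary or Theorem 1.1. Both arguments are valid; yours is heavier but also yields the sign information $P''(1)\leq 0$ at a zero, which the paper's computation does not directly give.

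The genuine gap is in the generic-nonvanishing claim, which is the substantive content of the lemma and which you explicitly leave open: you reduce it to showing $P''(1)\not\equiv 0$ on $\{P(1)=0\}$ and then only gesture at ``a concrete weight assignment'' or ``a dimension count'' without carrying either out. (Note also that exhibiting a single point of $\{P(1)=0\}\setminus\{P''(1)=0\}$ would only control the irreducible component of $\{P(1)=0\}$ containing that point, so even the strategy as stated is incomplete as a proof of genericity.) The paper supplies exactly the missing idea, and it is combinatorial rather than an explicit example: regard $P(1)$ and $P''(1)=\sum_j 2j^2P_j$ as polynomials in the edge weights and compare the coefficients of one and the same monomial. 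A monomial of $P(1)$ comes from a configuration of $2k$ even loops, each winding once around the cylinder, plus doubled edges; summing over the two orientations of each loop gives coefficient $S_{0,k}=\pm 2^{2k}$ in $P(1)$, while in $P''(1)$ each orientation choice is weighted by the square of the net winding number, giving $S_{2,k}=\sum_{r=0}^{2k}\binom{2k}{r}(2k-2r)^2$. Since the ratio $S_{2,k}/S_{0,k}$ depends on $k$, the two polynomials cannot divide each other, so $W_0\cap W_2=\{P(1)=0\}\cap\{P''(1)=0\}$ is a proper subvariety and generic weights with $P(1)=0$ satisfy $P''(1)\neq 0$ (equivalently, by your chain-rule identity $\varphi''(0)=-P''(1)$, the $\theta$-version as well). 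Without an argument of this kind your proposal proves the first-derivative statements but not the lemma.
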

\begin{proof}
We prove the result for derivatives with respect to $z$, the derivatives with respect to $\theta$ are very similar. Since
\begin{eqnarray*}
P(z)=\sum_{j}P_j(z^j+\frac{1}{z^j})
\end{eqnarray*}
we have
\begin{eqnarray*}
\left.\frac{\partial P}{\partial z}\right|_{z=1}&=&\sum_{j}j \left.P_j(z^{j-1}-\frac{1}{z^{j+1}})\right|_{z=1}=0\\
\left.\frac{\partial^2 P}{\partial z^2}\right|_{z=1}&=&\sum_{j}2j^2P_j
\end{eqnarray*}
Each monomial in $P(1)$ corresponds to a loop configuration including only even loops. Consider a configuration including $2k$ even loops, each of which winding exactly once around the cylinder. Except those non-contractible even loops, the rest of the graph is covered by doubled-edge configuration. Consider $P(1)$ as a polynomial of edge weights. The monomial corresponding to that configuration has a coefficient
\[S_{0,k}=\pm 2^{2k}\]
because each single loop can have two  different orientations, corresponding to two terms in the expansion of the determinant. However, the coefficient of the monomial corresponding to the same configuration in $\left.\frac{\partial^2 P(z)}{\partial z^2}\right)_{z=1}$ is 
\[S_{2,k}=(2k)^2+2k(2k-2)^2+\left(\begin{array}{c}2k\\2\end{array}\right)(2k-4)^2+\cdots+\left(\begin{array}{c}2k\\2k\end{array}\right)(2k-4k)^2\]
Obviously, $\frac{S_{2,k}}{S_{0,k}}$ is a number depending on $k$. Therefor $P(1)$ and $\left.\frac{\partial^2 P(z)}{\partial z^2}\right|_{z=1}$ cannot divide each other. Let
\begin{eqnarray*}
W_2&=&\{(w_e)_{e\in E}:\left.\frac{\partial^2 P(z)}{\partial z^2}\right|_{z=1}=0\}\\
W_0&=&\{(w_e)_{e\in E}: P(1)=0\}
\end{eqnarray*}
The intersection of $W_2$ and $W_0$ forms a proper subvariety of $W$. Hence if $P(1)=0$, and we choose the edge weights generically, $\left.\frac{\partial^2 P(z)}{\partial z^2}\right|_{z=1}\neq 0$.
\end{proof}
\noindent
\textbf{Remark.} Lemma 3.3 implies that if $P(1)=0$, for generic choice of edge weights, the intersection is of multiplicity 2.

\begin{proposition} Let 
\[F(r)=\frac{1}{2\pi}\int_{0}^{2\pi}\log P(re^{i\theta})d\theta\]
If $P(z)=0$ does not intersect the unit torus $\mathbb{T}$, $F(r)$ is differentiable at $r=1$; If $P(z)=0$ has a real zero of multiplicity 2 at $z=1$, 
\[\lim_{r\rightarrow 1+}\frac{\partial F(r)}{\partial r}-\lim_{r\rightarrow 1-}\frac{\partial F(r)}{\partial r}=2\]
\end{proposition}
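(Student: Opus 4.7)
The plan is to apply Jensen's formula directly to $P$, viewed as a Laurent polynomial. By Lemma 2.1, $P(z)=\sum_{0\le j\le m}P_j(z^j+z^{-j})$ is a real Laurent polynomial symmetric under $z\mapsto 1/z$, so $\tilde P(z):=z^{m}P(z)=c\prod_{k=1}^{2m}(z-z_k)$ is a genuine real polynomial of degree $2m$ whose roots $z_k$ are listed with multiplicity. I would interpret $\log P(re^{i\theta})$ as $\log|P(re^{i\theta})|$, which is natural since Corollary 3.2 (together with $P(\pm1)$ being Pfaffian squares) yields $P\ge0$ on $\mathbb{T}$. Splitting the logarithm and applying the classical Jensen identity
\[
\frac{1}{2\pi}\int_0^{2\pi}\log|re^{i\theta}-z_k|\,d\theta=\max(\log r,\log|z_k|)
\]
to each linear factor of $\tilde P$ gives the closed form
\[
F(r)=\log|c|-m\log r+\sum_{k=1}^{2m}\max(\log r,\log|z_k|),
\]
which is piecewise linear in $\log r$ and satisfies, at every $r$ distinct from all $|z_k|$,
\[
F'(r)=\frac{1}{r}\bigl(\#\{k:|z_k|<r\}-m\bigr).
\]

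For statement (1), if $P$ has no zeros on $\mathbb{T}$, then $\#\{k:|z_k|<r\}$ is locally constant near $r=1$, so $F$ is differentiable there. For statement (2), Theorem 1.1 forces any intersection with $\mathbb{T}$ to consist of a single real point, which by hypothesis is $z=1$ with multiplicity $2$; hence exactly two of the $z_k$'s equal $1$, and the remaining $2m-2$ roots pair up under the symmetry $z\leftrightarrow1/z$ into $m-1$ strictly inside and $m-1$ strictly outside the unit circle. As $r$ crosses $1$, $\#\{k:|z_k|<r\}$ jumps from $m-1$ to $m+1$, so
\[
\lim_{r\to1+}F'(r)-\lim_{r\to1-}F'(r)=\frac{(m+1)-(m-1)}{1}=2.
\]

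The proof is essentially mechanical once Jensen's formula is set up; the substantive input is Theorem 1.1, which together with the identity $P(-1)=Z^2>0$ (from the Pfaffian formula for the partition function after the orientation reversal used in the proof of Theorem 1.1) rules out any zero of $P$ on $\mathbb{T}$ other than the stated one at $z=1$. The only mild technicality is that when $r$ equals some $|z_k|$ the integrand acquires a logarithmic singularity, which is still integrable; hence Jensen's formula applies without regularization, the one-sided derivatives of $F$ at such a critical radius exist, and the jump formula above is valid. The main obstacle, such as it is, lies entirely in the symmetry bookkeeping that identifies the jump as exactly $2$ rather than a larger even integer, which is where the multiplicity-$2$ hypothesis is used.
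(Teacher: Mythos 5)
Your proposal is correct and follows essentially the same route as the paper: Jensen's formula applied to $P$ on circles $|z|=r$, identifying $F'(r)$ with a count of zeros inside the disk, so that the jump of $F'$ across $r=1$ equals the number of zeros (with multiplicity) of $P$ on $\mathbb{T}$, namely $2$. If anything, your version is slightly more careful than the paper's, since you pass to $\tilde P(z)=z^{m}P(z)$ to handle the pole of the Laurent polynomial at $z=0$ (the paper writes $\log|P(0)|$ directly), and you note that the symmetry $z\mapsto 1/z$ pairs the off-circle roots; the latter bookkeeping is not strictly needed, as the jump is just the number of roots on the unit circle.
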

\begin{proof} Since $P(z)$ is a Laurent polynomial in $z$, according to the Jensen's formula
\[\log|P(0)|=-\sum_{k=1}^{n}\log\left(\frac{r}{|a_k|}\right)+\frac{1}{2\pi}\int_{0}^{2\pi}\log|P(re^{i\theta})|d\theta\]
where $a_1,\cdots,a_n$ are the zeros of $P$ in the interior of the disk $\{z:|z|<r\}$. If $P(z)$ has no zeros on the circle $\{z:|z|=r\}$, then
\begin{eqnarray*}
\frac{\partial F}{\partial r}&=&\lim_{\Delta r\rightarrow 0}\frac{F(r+\Delta r)-F(r)}{\Delta r}\\
&=&\lim_{\Delta r\rightarrow 0}\frac{1}{\Delta r}\sum_{k=1}^{n}\log\left(\frac{r+\Delta r}{r}\right)\\
&=&\frac{n}{r}
\end{eqnarray*}
The proposition follows from substituting $r=1$, and the fact that the intersection of $P(z)=0$ with $\mathbb{T}$ can only be a single real point.
\end{proof}

\noindent\textbf{Remark.} Jensen's formula implies that the curve $P(z)=0$ is Harnack if and only if 
\[r_0(\lim_{r\rightarrow{r_0+}}-\lim_{r\rightarrow{r_0-}})\frac{\partial F(r)}{\partial r}\leq 2,\] for any $r_0>0$. In fact, if the height of the cylinder $m\leq 3$, we can always derive that the corresponding spectral curve is Harnack. To see that, first of all, being Harnack is a closed condition, for generic choice of edge weights, the intersection of $P(z)=0$ with $|z|=1$ is at most two points(counting multiplities). Without loss of generality, assume for some $r(0<r<1)$, $P(z)=0$ intersects $|z|=r$ at 3 different points $z_1,z_2,z_3$, then $\bar{z}_1,\bar{z}_2,\bar{z}_3$, lie also on the intersection of $P(z)=0$ and $|z|=r$, given that $P(z)$ is a real-coeffient polynomial.  Hence the intersection of $P(z)=0$ with $|z|=r$ is at least 4 points. Moreover, by symmetry, the reciprocal of those points are also roots of $P(z)=0$, then $P(z)=0$ has at least 8 roots, which is a contradiction to the fact $m\leq 3$.

\subsection{Limit Measure of Cylindrical Approximation}

From the proof we know that all terms in $P_{m\times n}(-1)$ are
positive, if $n$ is even. We can always enlarging the fundamental
domain in $z$ direction without changing edge weights to get
\begin{eqnarray*}
P_{m\times 2n}(-1)=P_{m\times n}(i)P_{m\times n}(-i)=P_{m\times
n}^2(i)=|Pf_{m\times 2n}K(-1)|^2
\end{eqnarray*}
Therefore $P_{m\times n}(i)$ is the partition function of dimer
configurations of the $m\times 2n$ cylinder graph. According to the
formula of enlarging the fundamental domain,
\[P_{m\times 2ln}(-1)=\prod_{z^{2l}=-1}P_{m\times n}(z),\]
we have
\begin{eqnarray}
\lim_{l\rightarrow\infty}\frac{1}{4l}\log P_{m\times 2ln}(-1)&=&\lim_{l\rightarrow\infty}\frac{1}{4\pi}\frac{2\pi}{2l}\sum_{z^{2l}=-1}\log P_{m\times n}(z)\\
&=&\frac{1}{4\pi}\int_{\mathbb{T}}\log P_{m\times
n}(z)\frac{dz}{iz}\label{freeenergy}
\end{eqnarray}
The convergence of the Riemann sums to the integral follows from the
fact that the only possible zeros of $P(z)$ on $\mathbb{T}$ is a
single real node. (\ref{freeenergy}) is defined to be the
\textbf{partition function per fundamental domain}.

Any probability measure on the infinite banded graph, with depth $m$ on one direction, and period $n$ on the other direction, is determined by the probability of cylindrical sets. Namely, we choose a finite number of edges $e_1,e_2,\cdots, e_k$ arbitrarily, and the probabilities
\[Pr(e_1\&e_2\&\cdots\&e_k)\]
that $e_1,e_2,\cdots,e_k$ occur in the dimer configuration simultaneously for all finite edge sets determines the probability measure. We consider the measures on the infinite graph as weak limits of measures on cylindrical graphs.  First of all, we prove a lemma about the entries of the inverse Kasteleyn matrix using the cylindrical approximation.
\begin{lemma}
\begin{eqnarray*}
\lim_{l\rightarrow\infty} K^{-1}_{m\times
2ln}(-1)_{(k_v,s_v),(k_w,s_w)}=\frac{1}{2\pi}p.v.\int_{\mathbb{T}}z^{k_v-k_w}\frac{cofactor
K_{m\times n}(s_v,s_w)(z)}{P_{m\times n}(z)}\frac{dz}{iz}\end{eqnarray*}
\end{lemma}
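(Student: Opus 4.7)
The plan is to exploit the $(2l)$-fold translational symmetry of the enlarged $m\times 2ln$ cylinder to block-diagonalize $K_{m\times 2ln}(-1)$ by a discrete Fourier transform in the $z$-periodic direction, express the entries of its inverse as a finite sum over the $(2l)$-th roots of $-1$, and then recognize the $l\to\infty$ limit as a Riemann sum.

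I would view the $m\times 2ln$ cylinder as $2l$ copies of the $m\times n$ fundamental domain glued cyclically in the $z$-direction, with the $(-1)$ in $K_{m\times 2ln}(-1)$ encoding an antiperiodic boundary identification.  Writing vertices as pairs $(k,s)$ with $k\in\{0,\ldots,2l-1\}$ the period index and $s$ the intra-domain index, the Kasteleyn matrix takes the block-circulant form $I\otimes A + S\otimes B + S^{-1}\otimes C$, where $S$ is the antiperiodic cyclic shift with $S^{2l}=-I$.  The shift $S$ is unitarily diagonalized by the Fourier basis with eigenvalues ranging over $\{z:z^{2l}=-1\}$, so the transform turns $K_{m\times 2ln}(-1)$ into the block-diagonal $\bigoplus_{z^{2l}=-1}K_{m\times n}(z)$.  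Inverting block by block and reading off the entry at $((k_v,s_v),(k_w,s_w))$ yields
\begin{equation*}
K^{-1}_{m\times 2ln}(-1)_{(k_v,s_v),(k_w,s_w)}=\frac{1}{2l}\sum_{z^{2l}=-1}z^{k_v-k_w}\,\bigl[K_{m\times n}(z)^{-1}\bigr]_{s_v,s_w},
\end{equation*}
and Cramer's rule replaces $[K_{m\times n}(z)^{-1}]_{s_v,s_w}$ by $\mathrm{cofactor}\,K_{m\times n}(s_v,s_w)(z)/P_{m\times n}(z)$.  The $(2l)$-th roots of $-1$ are the equispaced points $e^{i(2j+1)\pi/(2l)}$ with spacing $\pi/l$, so this finite sum is exactly a Riemann sum for $\frac{1}{2\pi}\int_{\mathbb{T}}z^{k_v-k_w}\,\mathrm{cofactor}\,K_{m\times n}(s_v,s_w)(z)/P_{m\times n}(z)\,dz/(iz)$.

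The main obstacle is justifying the passage to the integral when $P_{m\times n}(z)$ has a zero on $\mathbb{T}$.  By Theorem~1.1 any such zero is confined to a single real point, necessarily $z=\pm 1$, and the sample points $e^{i(2j+1)\pi/(2l)}$ are never real, so the finite sum is well defined for every $l$.  Outside any fixed neighborhood of the singular point the Riemann sum converges uniformly to the integral by continuity.  Near the singular point the sample points come in complex-conjugate pairs about the real axis, and because $P_{m\times n}$ and the cofactors have real coefficients, the integrand $G(z):=z^{k_v-k_w}\,\mathrm{cofactor}(z)/P_{m\times n}(z)$ satisfies $\overline{G(z)}=G(\bar z)$; pairing the contributions of $z$ and $\bar z$ isolates the symmetric cancellation that defines the Cauchy principal value.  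The boundedness of the left-hand side for every $l$ (these entries encode dimer probabilities of the finite cylinder) then forces any would-be double-pole contribution of $1/P_{m\times n}$ to be annihilated by a matching vanishing of the cofactor at the singular point, reducing the singularity to at worst a simple pole whose principal value exists.  Combining this with the off-singularity uniform convergence yields the stated $\mathrm{p.v.}$ integral and completes the proof.
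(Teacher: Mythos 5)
Your argument follows the paper's proof almost step for step: the paper constructs exactly the transition matrix you describe (columns built from the characters $e^{i\pi(2j+1)(2k+1)/(4l)}$), conjugates $K_{m\times 2ln}(-1)$ into the block-diagonal $\bigoplus_{z^{2l}=-1}K_{m\times n}(z)$, obtains the same finite sum $\frac{1}{2l}\sum_{j=0}^{2l-1}e^{i(2j+1)\pi(k_v-k_w)/(2l)}\,\mathrm{cofactor}\,K_{m\times n}(s_v,s_w)/P_{m\times n}(e^{(2j+1)i\pi/(2l)})$, and in the singular case splits the sum at angular distance $\delta$ from the real zero, recovering the principal value from the middle range and killing the near-singular pole terms by pairing conjugate sample points via the exact identity $\frac{1}{e^{i\theta}-1}+\frac{1}{e^{-i\theta}-1}=-1$ (so those paired terms total $O(\delta)$ after the $\frac{1}{2l}$ normalization).

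The one genuine gap is your justification that the cofactor vanishes at the zero of $P_{m\times n}$, i.e.\ that $Q(z)=z^{k_v-k_w}\,\mathrm{cofactor}\,K_{m\times n}(s_v,s_w)(z)/P_{m\times n}(z)$ has at most a simple pole. You deduce this from an asserted uniform-in-$l$ bound on the entries of $K^{-1}_{m\times 2ln}(-1)$, supported only by the parenthetical claim that these entries ``encode dimer probabilities.'' That is not accurate as stated: individual entries of $K^{-1}$ are not probabilities (Kenyon's local-statistics formula involves Pfaffian minors and products of several entries with edge weights); each entry is, up to sign, a ratio $\mathrm{Pf}\bigl(K_{\{u,v\}^{c}}\bigr)/\mathrm{Pf}(K)$, and a bound on $Z_{G\setminus\{u,v\}}/Z_G$ uniform in $l$ — while plausible since $u,v$ sit at bounded distance — requires its own surgery argument that you never supply. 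Without it your reduction fails, because a surviving double pole makes the sum diverge linearly in $l$ (the conjugate pairing does not cancel second-order poles). The paper closes this step with two lines of linear algebra instead: $K_{m\times n}(1)$ is a real antisymmetric matrix of even order, so its rank is even; if $P_{m\times n}(1)=\det K_{m\times n}(1)=0$ the corank is at least $2$, hence the adjugate $\mathrm{Adj}\,K_{m\times n}(1)$ is identically zero, so every cofactor vanishes at $z=1$ to order at least $1$ and $Q$ has at most a simple pole. (A minor sharpening: the singular point can only be $z=1$, not $z=-1$, since the proof of Theorem 1.1 gives $P(-1)=Z^2>0$.) Substituting this adjugate argument for your boundedness appeal makes your proof coincide with the paper's.
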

\begin{proof}
To that end, we construct a transition matrix $S$
to make $S^{-1}K_{m\times 2ln}S$ block diagonal, with each block
corresponding to a $m\times n$ quotient graph. Define
\begin{eqnarray*}
S=(e_0^1,...,e_0^{6mn},e_1^1,...,e_1^{6mn},...,e_{2l-1}^1,...,e_{2l-1}^{6mn})
\end{eqnarray*}
where
\begin{eqnarray*}
e_k^s(j,t)=
\begin{cases} e^{\frac{i\pi(2j+1)(2k+1)}{4l}}&s=t\\0&s\neq t
\end{cases}
\end{eqnarray*}
then
\begin{eqnarray*}
S^{-1}K_{m\times 2ln}S=\left(\begin{array}{cccc}K_{m\times
n}(e^{\frac{i\pi}{2l}})&
&\multicolumn{2}{c}{\raisebox{1.3ex}[0pt]{\Huge0}} \\ &K_{m\times
n}(e^{\frac{3i\pi}{2l}})& & \\ & &\ddots&
\\ \multicolumn{2}{c}{\raisebox{1.3ex}[0pt]{\Huge0}}& &K_{m\times n}(e^{\frac{i(4l-1)\pi}{2l}})\end{array}\right)
\end{eqnarray*}
Since $S^{-1}=\frac{1}{2l}\bar{S}^t$, we have
\begin{eqnarray*}
K^{-1}_{m\times
2ln}(-1)_{(k_v,s_v),(k_w,s_w)}=\frac{1}{2l}\sum_{j=0}^{2l-1}\frac{cofactor
K_{m\times n}(s_v,s_w)}{P_{m\times
n}(e^{\frac{(2j+1)i\pi}{2l}})}e^{\frac{i(2j+1)\pi(k_v-k_w)}{2l}}
\end{eqnarray*}
where $k_v$ is the index of the fundamental domain for vertex $v$
and $s_v$ is the index of vertex in the fundamental domain.

If $P_{m\times n}(z)$ has no zero on $\mathbb{T}$, we have
\begin{eqnarray}
\lim_{l\longrightarrow\infty}K^{-1}_{m\times
2ln}(-1)_{(k_v,s_v),(k_w,s_w)}=\frac{1}{2\pi}\int_{\mathbb{T}}z^{k_v-k_w}\frac{cofactor
K_{m\times n}(s_v,s_w)(z)}{P_{m\times n}(z)}\frac{dz}{iz}\label{noncritical}
\end{eqnarray}

If 1 is an order-2 zero of $P_{m\times n}(z)$, let
\begin{eqnarray*}
Q(z)=z^{k_v-k_w}\frac{cofactor K_{m\times n}(s_v,s_w)(z)}{P_{m\times
n}(z)},
\end{eqnarray*}
Since $\det K(1)$ is an anti-symmetric real matrix of even order, and non-invertible, the dimension of its null space is non-zero and even. Hence $Adj K(1)$ is a zero matrix, and 1 is at least a zero of order 1 for $cofactor K_{m\times n}(s_v,s_w)(z)$. Then in a neighborhood of 1,
\begin{eqnarray*}
Q(z)=\frac{Res_{z=1}Q(z)}{z-1}+R(z),
\end{eqnarray*}
where $R(z)$ is analytic at 1.
\begin{eqnarray}
\lim_{l \rightarrow\infty}K^{-1}_{m\times
2ln}(-1)_{(k_v,s_v),(k_w,s_w)}=\lim_{\delta\rightarrow
0+}\lim_{l\rightarrow\infty}\frac{1}{2l}\left(\sum_{0\leq
j<\frac{l\delta}{\pi}-\frac{1}{2}}+\sum_{\frac{l\delta}{\pi}-\frac{1}{2}\leq
j\leq
2l-\frac{1}{2}-\frac{l\delta}{\pi}}+\sum_{2l-\frac{1}{2}-\frac{l\delta}{\pi}<j\leq
2l-1}\right)Q(e^{\frac{(2j+1)i\pi}{2l}})\label{expression}
\end{eqnarray}

For the second term,
\begin{eqnarray}
&&\lim_{\delta\rightarrow 0+}\lim_{l\rightarrow
0}\frac{1}{2l}\sum_{\frac{l\delta}{\pi}-\frac{1}{2}\leq j\leq
2l-\frac{1}{2}-\frac{l\delta}{\pi}}Q(e^{\frac{(2j+1)i\pi}{2l}})\\&=&\lim_{\delta\rightarrow
0+}\frac{1}{2\pi}\int_{\delta}^{2\pi-\delta}Q(e^{i\theta})d\theta\\
&=&p.v.\frac{1}{2\pi}\int_{\mathbb{T}}Q(z)\frac{dz}{iz}\\
&=&\frac{1}{2}Res_{z=1}Q(z)+\sum Res_{|z|<1}\frac{Q(z)}{z}\label{secondterm}
\end{eqnarray}
For the first and the third term
\begin{eqnarray*}
&&\lim_{\delta\rightarrow
0+}\lim_{l\rightarrow\infty}\frac{1}{2l}\left(\sum_{0\leq
j<\frac{l\delta}{\pi}-\frac{1}{2}}+\sum_{2l-\frac{1}{2}-\frac{l\delta}{\pi}<j\leq
2l-1}\right)Q(e^{\frac{(2j+1)i\pi}{2l}})\\&=&
\lim_{\delta\rightarrow
0+}\lim_{l\rightarrow\infty}\frac{1}{2l}\left(\sum_{0\leq
j<\frac{l\delta}{\pi}-\frac{1}{2}}+\sum_{2l-\frac{1}{2}-\frac{l\delta}{\pi}<j\leq
2l-1}\right)\left(\frac{Res_{z=1}Q(z)}{e^{\frac{(2j+1)i\pi}{2l}}-1}+R(e^{\frac{(2j+1)i\pi}{2l}})
\right)
\end{eqnarray*}
Since $R(z)$ is analytic in a neighborhood of 1,
\begin{eqnarray}
\lim_{\delta\rightarrow
0+}\lim_{l\rightarrow\infty}\frac{1}{2l}\left(\sum_{0\leq
j<\frac{l\delta}{\pi}-\frac{1}{2}}+\sum_{2l-\frac{1}{2}-\frac{l\delta}{\pi}<j\leq
2l-1}\right)R(e^{\frac{(2j+1)i\pi}{2l}})=0\label{principal}
\end{eqnarray}
Moreover,
\begin{eqnarray}
&&\lim_{\delta\rightarrow
0+}\lim_{l\rightarrow\infty}\frac{1}{2l}Res_{z=1}Q(z)\left(\sum_{0\leq
j<\frac{l\delta}{\pi}-\frac{1}{2}}+\sum_{2l-\frac{1}{2}-\frac{l\delta}{\pi}<j\leq
2l-1}\right)\frac{1}{e^{\frac{(2j+1)i\pi}{2l}}-1}\\
&=&\lim_{\delta\rightarrow
0+}\lim_{l\rightarrow\infty}\frac{1}{2l}Res_{z=1}Q(z)\sum_{0\leq
j<\frac{l\delta}{\pi}-\frac{1}{2}}\left(\frac{1}{e^{\frac{(2j+1)i\pi}{2l}}-1}+\frac{1}{e^{-\frac{(2j+1)i\pi}{2l}}-1}\right)\\
&=&-\lim_{\delta\rightarrow
0+}\frac{1}{2l}Res_{z=1}Q(z)\left[\frac{l\delta}{\pi}+\frac{1}{2}\right]=0\label{minor}
\end{eqnarray}
And the theorem follows from (\ref{noncritical}), (\ref{expression}), (\ref{secondterm}),(\ref{principal}), (\ref{minor}).
\end{proof}

\begin{proposition}Using a large cylinder to approximate the infinite periodic banded graph, we derive that the weak limit of probability measures of the dimer model exists. The probability of a cylindrical set under this limit measure is 
\begin{eqnarray*}
Pr(e_1\& e_2\&\cdots\&e_k)=\lim_{l\rightarrow\infty}\prod_{j=1}^{k}w_{e_j}\sqrt{\det K^{-1}_{m\times 2l n}\left(\begin{array}{ccccc}u_{1}&v_{1}&\cdots&u_{k}&v_{k}\\ u_1&v_1&\cdots&u_k&v_{k}\end{array}\right)}
\end{eqnarray*}
where $w_{e_j}$ is the weight of $e_j$, and $u_j,v_j$ are the two vertices of $e_j$.
\end{proposition}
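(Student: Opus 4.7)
The plan is to push the finite Pfaffian formula for edge probabilities through the limit $l\to\infty$, convert it to a determinantal form, and then identify the limiting expression using Lemma 3.5. First, I would use that for the cylinder $G_{m\times 2ln}$ with $n$ even, the graph is planar and, as argued in the proof of Theorem 1.1, reversing the orientations of the edges crossed by $\gamma_x$ turns the given orientation into a clockwise-odd one. Consequently $K_{m\times 2ln}(-1)$ is (up to the global sign from the reversal) a genuine Kasteleyn matrix of the planar cylinder, so $Z_{m\times 2ln}=|\mathrm{Pf}\,K_{m\times 2ln}(-1)|$. The planar specialization of the edge-probability formula recalled in Section 2 then gives, for a fixed set of edges $e_1=u_1v_1,\ldots,e_k=u_kv_k$,
\begin{equation*}
Pr_{m\times 2ln}(e_1,\ldots,e_k)=\frac{\prod_{j=1}^{k}w_{e_j}}{Z_{m\times 2ln}}\bigl|\mathrm{Pf}\bigl(K_{m\times 2ln}(-1)\bigr)_{E_k^c}\bigr|,
\end{equation*}
where $E_k^c$ is the complement of $\{u_1,v_1,\ldots,u_k,v_k\}$ in the vertex set.

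Second, I would apply the classical Pfaffian-minor identity for an invertible skew-symmetric matrix $M$ and index set $E$,
\begin{equation*}
\mathrm{Pf}(M_{E^c})=\pm\,\mathrm{Pf}(M)\cdot\mathrm{Pf}\bigl((M^{-1})_{E,E}\bigr),
\end{equation*}
and then square it and use $\mathrm{Pf}(A)^2=\det(A)$ to obtain
\begin{equation*}
\left(\frac{\mathrm{Pf}(M_{E^c})}{\mathrm{Pf}(M)}\right)^2=\det\bigl((M^{-1})_{E,E}\bigr).
\end{equation*}
Substituting $M=K_{m\times 2ln}(-1)$ and $E=\{u_1,v_1,\ldots,u_k,v_k\}$, the awkward ratio of Pfaffians becomes a determinant of a fixed-size principal submatrix of the inverse Kasteleyn matrix, and the probability formula becomes
\begin{equation*}
Pr_{m\times 2ln}(e_1,\ldots,e_k)=\prod_{j=1}^{k}w_{e_j}\sqrt{\det K^{-1}_{m\times 2ln}(-1)_{E,E}}.
\end{equation*}

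Third, I would take $l\to\infty$. By Lemma 3.5, each of the $2k\times 2k$ entries $K^{-1}_{m\times 2ln}(-1)_{(k_v,s_v),(k_w,s_w)}$ converges to the explicit principal-value integral given there; since $2k$ is fixed and the determinant is a polynomial in these entries, the right-hand side converges to the stated limit. To upgrade this to existence of a weak limit measure, I would invoke Kolmogorov's extension theorem: nonnegativity and normalization of the limit probabilities follow from the corresponding properties of the finite Boltzmann measures, while compatibility of marginals for nested finite edge-sets is inherited from the finite measures and preserved under the continuous determinantal expression above. The main obstacle in carrying this out is a careful bookkeeping of the Pfaffian-minor identity, namely ensuring that the sign ambiguity is correctly absorbed by the absolute value in the finite formula and by the square root in the limit; once this is settled the convergence itself is an immediate consequence of Lemma 3.5 and continuity of $\det$ in finitely many entries.
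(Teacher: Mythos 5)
Your proposal is correct and follows essentially the same route as the paper: express the finite-cylinder probability via the Pfaffian of $K_{m\times 2ln}(-1)$ (valid since $z=-1$ reverses the $\gamma_x$-edges and yields a clockwise-odd orientation of the planar cylinder), convert the ratio of Pfaffians to $\sqrt{\det\,(K^{-1})_{E,E}}$ --- your Pfaffian-minor identity squared is exactly the Jacobi formula the paper invokes --- and pass to the limit using Lemma 3.5. Your explicit treatment of the sign ambiguity and the Kolmogorov-extension remark merely flesh out steps the paper leaves implicit (and your version even corrects an apparent typo in the paper's equation for $Z^2_{e_1,\cdots,e_k}$, where the weight product should be squared).
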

\begin{proof}
On a finite cylindrical graph of $m\times 2ln$
\begin{eqnarray*}
Pr(e_1\& e_2\&\cdots\&e_k)=\frac{Z_{e_1,\cdots, e_k}}{Z}
\end{eqnarray*}
where $Z$ is the partition function of dimer configurations on that graph, and $Z_{e_1,\cdots, e_k}$ is the partition function of dimer configurations for which $e_1,\cdots e_k$ appear simultaneously. Since
\begin{eqnarray}
Z^2&=&\det K_{m\times 2ln}(-1)\\
Z^2_{e_1,\cdots,e_k}&=&\prod_{j=1}^{k}w_{e_j}cofactor K_{m\times 2ln,(u_1,v_1,\cdots,u_k,v_k)}(-1)\label{specialpartition}
\end{eqnarray}
(\ref{specialpartition}) follows from the fact that if originally we have a clockwise-odd orientation, we still have a clockwise-odd orientation when removing edges and ending vertices, while keeping the orientation on the rest of the graph. The proposition follows from Jacobi's formula for the determinant of minor matrices.
\end{proof}

We consider two edges $e_1$ and $e_2$ with weight $x_1$ and $x_2$ on
$m\times 2ln$ cylinder, and compute the covariance.
\begin{eqnarray}
&&Pr_{m\times 2ln}(e_1 \& e_2)-Pr_{m\times
2ln}(e_1)Pr_{m\times 2ln}(e_2)\\
&=&x_1x_2\sqrt{\det K^{-1}_{m\times 2ln}\left(\begin{array}{cccc}v_1&w_1&v_2&w_2\\v_1&w_1&v_2&w_2\end{array}\right)(-1)}\\&&-x_1x_2\sqrt{\det K^{-1}_{m\times 2ln}\left(\begin{array}{cc}v_1&w_1\\v_1&w_1\end{array}\right)(-1)\cdot\det K^{-1}_{m\times 2ln}\left(\begin{array}{cc}v_2&w_2\\v_2&w_2\end{array}\right)(-1)}\\
&=&x_1x_2(|K^{-1}_{m\times 2ln}(v_1,w_1)K^{-1}_{m\times
2ln}(v_2,w_2)+K^{-1}_{m\times 2ln}(v_1,w_2)K^{-1}_{m\times
2ln}(w_1,v_2)\\&&-K^{-1}_{m\times 2ln}(v_1,v_2)K^{-1}_{m\times
2ln}(w_1,w_2)|-|K^{-1}_{m\times 2ln}(v_1,w_1)K^{-1}_{m\times
2ln}(v_2,w_2)|)\label{covariance}
\end{eqnarray}
In order to compute the covariance as $l\nearrow\infty$, we only
need to compute the entries of $K^{-1}_{m\times 2ln}(-1)$ as
$l\nearrow\infty$. 

Then we have the following proposition

\begin{proposition}
Consider the dimer model on a Fisher graph, embedded into an
$m\times ln$ cylinder, as illustrated in Figure 4. Let the
circumference of the cylinder go to infinity, i.e.
$l\rightarrow\infty$, and keep the height of the cylinder unchanged.
Consider two edges $e_1$ and $e_2$. As $|e_1-e_2|\nearrow\infty$: if
$P(z)=0$ does not intersect $\mathbb{T}$, the edge-edge correlation
decays exponentially ; if $P(z)=0$ has a node at 1, the edge-edge
correlation tends to a constant.
\end{proposition}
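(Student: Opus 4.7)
The covariance formula (\ref{covariance}) expresses $\mathrm{Pr}_{m\times 2ln}(e_1\&e_2)-\mathrm{Pr}_{m\times 2ln}(e_1)\mathrm{Pr}_{m\times 2ln}(e_2)$ as a bilinear combination of six entries of $K^{-1}_{m\times 2ln}(-1)$: the two on-edge entries $K^{-1}(v_i,w_i)$, whose index separation remains $O(1)$, and the four cross entries $K^{-1}(v_1,w_2)$, $K^{-1}(w_1,v_2)$, $K^{-1}(v_1,v_2)$, $K^{-1}(w_1,w_2)$, whose index separations grow like $|e_1-e_2|$. By Proposition~3.2 we may pass to the $l\to\infty$ limit first; Lemma~3.3 then gives each limiting entry as a (principal value) contour integral over $\mathbb{T}$, and the task reduces to estimating these integrals as $|k_v-k_w|\to\infty$.

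Assume first that $P_{m\times n}(z)\neq 0$ on $\mathbb{T}$. Then the integrand
\[
Q(z)=z^{k_v-k_w}\,\frac{\mathrm{cofactor}\,K_{m\times n}(s_v,s_w)(z)}{P_{m\times n}(z)}\cdot\frac{1}{iz}
\]
is holomorphic in an annular neighborhood $\{\rho<|z|<\rho^{-1}\}$ of the unit circle for some $\rho<1$. Deforming the contour from $\mathbb{T}$ to $\{|z|=\rho\}$ when $k_v-k_w>0$ and to $\{|z|=\rho^{-1}\}$ when $k_v-k_w<0$, the residue theorem yields $|K^{-1}(v,w)|\le C\rho^{|k_v-k_w|}$ with $C$ independent of the separation. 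Hence the four cross entries decay exponentially in $|e_1-e_2|$ while the on-edge entries remain $O(1)$, and substitution into (\ref{covariance}) together with the elementary inequality $\bigl||A+B|-|A|\bigr|\le|B|$ gives exponential decay of the covariance.

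In the critical case, $P_{m\times n}$ has a double zero at $z=1$ and the proof of Lemma~3.3 decomposes each limiting entry as $\tfrac12\mathrm{Res}_{z=1}Q(z)$ plus a finite sum of residues of $Q(z)/z$ at the zeros of $P_{m\times n}$ strictly inside $\{|z|<1\}$. Because $z^{k_v-k_w}=1$ at $z=1$, the node contribution is a constant determined only by $(s_v,s_w)$, while the remaining residues at poles $\zeta$ with $|\zeta|<1$ carry a factor $\zeta^{k_v-k_w}$ and therefore decay exponentially. Each of the four cross entries accordingly converges, as $|e_1-e_2|\to\infty$, to a constant $c_{ij}$ depending only on the fundamental-domain indices of its endpoints, and (\ref{covariance}) converges to $x_1x_2\bigl(|AB+c_{12}c_{12}'-c_{11}'c_{22}'|-|AB|\bigr)$. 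The main technical point is verifying that these limiting constants do not cancel inside the absolute-value structure of (\ref{covariance}); this is a genericity statement, since each $c_{ij}$ is an algebraic function of the edge weights and the cancellation locus is a proper subvariety, so the covariance tends to a nonzero constant for generic weights.
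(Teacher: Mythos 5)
Your argument is correct and is essentially the paper's own proof: you substitute the $l\to\infty$ limits of the inverse-Kasteleyn entries into the covariance formula (\ref{covariance}) and use the decomposition (\ref{secondterm}), in which the residues strictly inside $\mathbb{T}$ give the exponentially decaying part (your contour deformation to $|z|=\rho$ merely makes explicit the rate $\rho^{|k_v-k_w|}$ that the paper leaves implicit in saying the second term tends to $0$), while the half-residue at the node, where $z^{k_v-k_w}=1$, gives the constant part. Your closing genericity paragraph is superfluous for the statement as written --- the proposition asserts convergence to a constant, not to a nonzero constant --- and as formulated it is also incomplete, since to know the cancellation locus is a \emph{proper} subvariety one must exhibit at least one weight choice with no cancellation (e.g.\ the $1\times 2$ cylinder example, where the limiting covariance is $-\frac{1}{4}$).
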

\begin{proof}
The theorem follows from formula (\ref{covariance}), and the
estimates of the entries of inverse Kasteleyn matrix. By (\ref{secondterm}), the second term goes to 0 as $|e_1-e_2|\rightarrow\infty$, the first term is 0, if no zero exists on $\mathbb{T}$, the first term is a nonzero constant if the spectral curve has a real node at 1.
\end{proof}

\begin{example}($1\times2$ cylindrical graph)Assume we have a dimer
model on a Fisher graph embedded into an infinite cylinder of height
1 and period 2. One period of the graph is illustrated in Fig 5.

\begin{figure}[htbp]
  \centering
\scalebox{0.8}[0.8]{\includegraphics{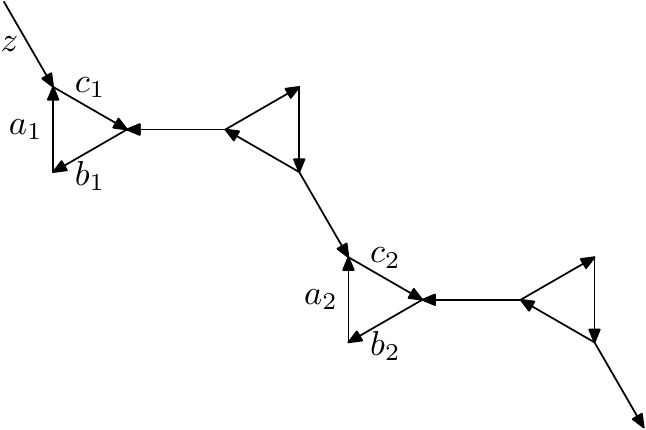}}
   \caption{$1\times 2$ cylindrical graph }
\end{figure}

The characteristic polynomial of the model is
\begin{eqnarray*}
P(z)=(b_1b_2z-a_1a_2)(\frac{b_1b_2}{z}-a_1a_2).
\end{eqnarray*}
The probability that an $a_1$-edge occurs is
\begin{eqnarray*}
Pr(a_1)&=&\frac{a_1a_2}{4\pi}\left(p.v.\int_{\mathbb{T}}\frac{1}{a_1a_2-b_1b_2z}\frac{dz}{\sqrt{-1}z}+p.v.\int_{\mathbb{T}}\frac{1}{a_1a_2-\frac{b_1b_2}{z}}\frac{dz}{\sqrt{-1}z}\right)\\
&=&\left\{\begin{array}{cc}1&\mathrm{if}\
a_1a_2>b_1b_2\\0&\mathrm{if}\
a_1a_2<b_1b_2\\\frac{1}{2}&\mathrm{if}\
a_1a_2=b_1b_2\end{array}\right.
\end{eqnarray*}
$P(z)=0$ has a real node at $1$ if and only if $a_1a_2=b_1b_2$. At
the critical case, the covariance of an $a_1$ edge and a $b_2$ edge,
as their distance goes to infinity, is
\begin{eqnarray*}
Pr(a_1\&b_2)-Pr(a_1)Pr(b_2)=0-\frac{1}{4}=-\frac{1}{4}
\end{eqnarray*}
\end{example}

\section{Graph on a Torus}

\subsection{Combinatorial and Analytic Properties}

To compute the characteristic polynomial $P(z,w)$ of the Fisher
graph, we give an orientation to edges as illustrated in Figure 1.

\begin{lemma} The characteristic polynomial
P(z,w) for a periodic Fisher graph with period $(m,0)$ and $(0,n)$ is a
Laurent polynomial of the following form:
\begin{eqnarray*}
P(z,w)=\sum_{i,j} P_{ij}(z^i w^j+\frac{1}{w^jz^i})
\end{eqnarray*}
where $(i,j)$ are integral points of the Newton polygon with
vertices $(\pm m,0),(0,\pm n),(\pm m,\mp n)$:
\begin{figure}[htbp]
\centering
\includegraphics{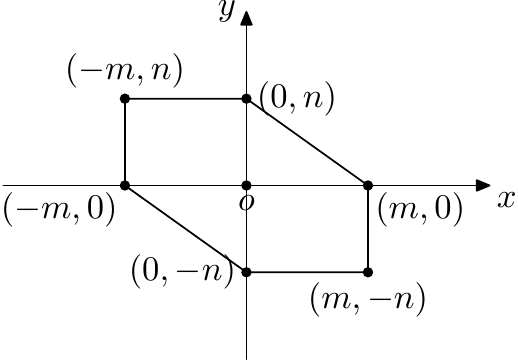}
\caption{Newton polygon}
\end{figure}
\end{lemma}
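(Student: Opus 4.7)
My plan is to prove three assertions in sequence: that $P(z,w)$ is a Laurent polynomial (immediate from the construction, since each entry of $K(z,w)$ is either zero or a single monomial $\pm K_{u,v}\,z^{\varepsilon_1}w^{\varepsilon_2}$ with $\varepsilon_i\in\{-1,0,1\}$); that $P(z,w)=P(1/z,1/w)$, which yields the symmetric form; and that the support of $P$ is contained in the stated hexagon.

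For the symmetry, I would verify the matrix identity $K(z,w)^T=-K(1/z,1/w)$ by an entry-by-entry check. On an uncrossed edge this is simply the skew-symmetry of the underlying Kasteleyn matrix; on an edge in $E_H$ oriented $u\to v$ the entry is $K(z,w)_{u,v}=zK_{u,v}$, while its transposed partner is $K(z,w)_{v,u}=z^{-1}K_{v,u}=-z^{-1}K_{u,v}$, which is exactly $-K(1/z,1/w)_{u,v}$; the same computation applies on $E_V$ with $w$ in place of $z$. Since the total number of vertices $6mn$ is even, $\det(-A)=\det A$, and combining with $\det A=\det A^T$ gives
\[P(z,w)=\det K(z,w)=\det K(z,w)^T=\det\bigl(-K(1/z,1/w)\bigr)=P(1/z,1/w).\]
This yields $P_{i,j}=P_{-i,-j}$, and pairing monomials regroups the Laurent expansion into the symmetric sum displayed in the statement.

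For the Newton polygon, I would expand $\det K(z,w)$ via the Leibniz formula. Each nonvanishing term comes from a permutation $\sigma$ of the vertex set whose cycle decomposition produces a cycle cover of the quotient torus graph (where $2$-cycles correspond to doubled edges and contribute trivially to the winding). Each cycle has a well-defined homology class $(a_k,b_k)\in H_1(T^2;\mathbb Z)$ read off from its signed crossings of $\gamma_x$ and $\gamma_y$, and the monomial produced has degree $(i,j)=\sum_k(a_k,b_k)$. Triangle edges lie inside a single fundamental domain and cross neither $\gamma_x$ nor $\gamma_y$, so they contribute no winding. The three classes of non-triangle edges correspond to the three primitive edge directions of the underlying honeycomb lattice, and these primitive vectors satisfy the single linear dependence $e_1+e_2+e_3=0$. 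Counting the edges in each class that are crossed by $\gamma_x$, by $\gamma_y$, and by a third ``diagonal'' dual loop in the $m\times n$ fundamental domain produces the three bounds $|i|\le m$, $|j|\le n$, and $|in+jm|\le mn$, which together cut out exactly the hexagon in the statement.

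The main obstacle is the third, diagonal bound $|in+jm|\le mn$. The two axis-parallel bounds follow from a direct edge count, but the diagonal bound requires either identifying a third cut on the torus dual to the third honeycomb direction and bounding its signed crossings by the cycle cover, or using the linear relation among the primitive vectors to express the third signed count in terms of $i$ and $j$. A cleaner alternative route is to invoke the enlarging-fundamental-domain identity $P_{m,n}(z,w)=\prod_{u^m=z,v^n=w}P_1(u,v)$ and reduce to the base case $(m,n)=(1,1)$, for which the Newton polygon is the unit hexagon with vertices $(\pm 1,0),(0,\pm 1),(\pm 1,\mp 1)$; the remaining work is then to track how the product operation scales the Newton polygon by $(m,n)$.
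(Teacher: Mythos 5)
Your first two steps are sound: the Laurent property is immediate, and your matrix identity $K(z,w)^{T}=-K(1/z,1/w)$ combined with $\det(-A)=\det A$ for even size $6mn$ is a correct, and in fact tidier, repackaging of the paper's argument (the paper instead reverses the orientation of all loops in the Leibniz expansion and notes the sign changes by $(-1)^{6mn}=1$). The genuine gap is exactly where you flag it: the diagonal bound $|in+jm|\le mn$ is the entire nontrivial content of the lemma, and you sketch two strategies without carrying either out. The paper's device for closing it is a single trick you are missing: substitute $(z,w)\mapsto(z^{n},w^{m})$ by placing $z^{\pm1}$ on \emph{every} $b$-edge and $w^{\pm1}$ on \emph{every} $c$-edge, not only on the edges crossed by $\gamma_x$ and $\gamma_y$. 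Then the exponent $(\tilde i,\tilde j)$ of any monomial becomes a sum of local contributions over the $2mn$ triangles, and at each triangle the local configuration contributes one of $(0,0),(0,\pm1),(\pm1,0),(\pm1,\mp1)$ --- the unit hexagon. Since each non-triangle edge is shared by two triangles, summing gives $2|\tilde i|\le 2mn$, $2|\tilde j|\le 2mn$ and $2|\tilde i+\tilde j|\le 2mn$ simultaneously, and rescaling $\tilde i=ni$, $\tilde j=mj$ yields all three families of sides of the hexagon from one local count. The relation $e_1+e_2+e_3=0$ among the honeycomb directions that you gesture at is precisely encoded in the shape of this local contribution set, but gesturing at it is not a proof; the spreading of the variables over all $b$- and $c$-edges is what turns it into one.

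Your fallback route via the enlarging-the-fundamental-domain identity does not rescue the argument, for two reasons. First, the lemma concerns an arbitrary Fisher graph with periods $(m,0)$ and $(0,n)$: the edge weights vary over the whole $m\times n$ fundamental domain, so the graph is in general not a cover of any $1\times1$ weighted quotient, and there is no $P_1$ to reduce to --- the product formula applies only to covers of a fixed weighted graph. Second, even in the genuinely $1\times1$-periodic case, your pairing of root orders with variables is transposed: with $P_{m,n}(z,w)=\prod_{u^{m}=z,\,v^{n}=w}P_1(u,v)$ there are $mn$ factors, each of $u$-degree at most $1$, and $u$ is a root of unity times $z^{1/m}$, so the top $z$-degree of the product is $mn/m=n$, which produces the hexagon with vertices $(\pm n,0),(0,\pm m)$ rather than the one claimed (the paper's $G_1$ has $m$ $z$-edges, hence top $z$-degree $m$). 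The correct block-diagonalization pairs the variables contragrediently --- the variable whose top degree is $m$ receives $n$-th roots, as in $P_{m,n}(z,w)=\prod_{u^{n}=z}\prod_{v^{m}=w}P_1(u,v)$; compare the cylindrical computation in Section 3.2, where enlarging the period by $2l$ puts $2l$-th roots into the blocks while the transverse degree is unchanged. So the ``remaining work'' you defer in this route is not mere bookkeeping: as set up it would have delivered the wrong polygon.
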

\begin{proof}
Let $p=6mn$. By definition, \[P(z,w)=\det
K(z,w)=\sum_{i_1,...,i_p}(-1)^{\sigma
(i_1,...,i_p)}k_{1,i_1}\times\cdots\times k_{p,i_p}.\] Here $\sigma$ is
the number of even cycles of the permutation $(i_1,...,i_p)$.  The
sum is over all possible permutations of $p$ elements.  Each term of
$P(z,w)$ corresponds to an oriented loop configuration occupying
each vertex exactly twice. For the graph $G_1$ with $m$ $z$-edges
and $n$ $w$-edges, $P(z,w)$ is a Laurent polynomial with leading
terms
$z^m$,$\frac{1}{z^m}$,$w^n$,$\frac{1}{w^n}$,$\frac{z^m}{w^n}$,$\frac{w^n}{z^m}$.
$z^iw^j$ corresponds to loops of homology class $(i,j)$. $P(z,w)$ is
symmetric with respect to $z^iw^j$ and $\frac{1}{w^jz^i}$. That is
because for each term of $z^iw^j$, if we reverse the orientation of
all loops, we get a term of $\frac{1}{w^jz^i}$ with coefficients of
the same absolute value, corresponding to the product of weights of
edges included in the configuration. The sign of the term is
multiplied by $(-1)^p=1$.

To show that all the powers $(i,j)$ lie in the polygon, we multiply
all the $b$-edges by $z$ (or $\frac{1}{z}$), and all the $c$-edges
by $w$(or $\frac{1}{w}$), according to their orientation. This way
the corresponding characteristic polynomial becomes $P(z^n,w^m)$.
Let $(\tilde{i},\tilde{j})$ be a power of monomial in $P(z^n,w^m)$.
At each triangle, all the possible contributions of local
configurations to the power of the monomial can only be
$(0,0),(0,\pm1),(\pm1,0),(\pm1,\mp 1)$. Examples are illustrated in
the following Figure 4.The left graph has two doubled edges, and the contribution to the power of the monomial is $(0,0)$, the right graph has a loop winding from the $z$-edge to the $w$-edge, and the contribution to the power of the monomial is $(1,-1)$. 
\begin{figure}[htbp]
  \centering
\includegraphics*{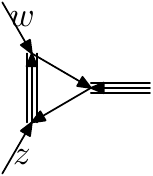}\qquad  \includegraphics*{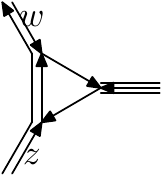}
   \caption{local configurations }
\end{figure}

Consider all the $2mn$ triangles. For each edge, we considered its
contribution twice, since we counted it from both triangles it
connected. Hence we have
\begin{eqnarray*}
-2mn\leq 2\tilde{i}\leq 2mn\\
-2mn\leq 2\tilde{j}\leq 2mn\\
-2mn\leq 2(\tilde{i}+\tilde{j})\leq 2mn
\end{eqnarray*}
Since $\tilde{i}=ni$, $\tilde{j}=mj$, and the Newton polygon $N(P)$
is defined to be
\begin{eqnarray*}
N(P)=\mathrm{convex\ hull}\{(i,j)\in \mathbb{Z}^2|z^iw^j\
\mathrm{is\ a\ monomial\ in\ P(z,w) }\},
\end{eqnarray*}
the lemma follows
\end{proof}

\begin{lemma}
For configurations with odd loops corresponding to a non-vanishing
term in $P(z,w)$, all odd loops have non-trivial homology, and the total number
of odd loops is even.
\end{lemma}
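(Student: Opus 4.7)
The two assertions can be handled separately, via a vertex count for the parity statement and a sign-reversing involution for the homology statement.

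For the parity of the number of odd loops, note that each term in the expansion of $\det K(z,w)$ corresponds to a permutation $\sigma$ of the $6mn$ vertices of $G_1$ whose cycle decomposition is a union of $2$-cycles (representing doubled edges) and longer oriented cycles (the loops). Since the $2$-cycles cover an even number of vertices and $6mn$ is even, the sum of the lengths of the loops of length at least $3$ must also be even. This sum has the same parity as the number of odd-length loops, so the number of odd loops in any configuration is necessarily even.

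For the homology statement, the plan is to exhibit a fixed-point-free, sign-reversing involution $\iota$ on the set of configurations possessing at least one contractible odd loop. Fix a total ordering on the edges of $G_1$, and given such a configuration $C$, let $L=L(C)$ be the contractible odd loop of $C$ whose smallest edge is minimum among those of all contractible odd loops in $C$. Define $\iota(C)$ to be the configuration obtained from $C$ by reversing the orientation of $L$ while leaving all other cycles untouched. Because $L$ has length $\ell\ge 3$, its two orientations are distinct cycles in the symmetric group, so $\iota(C)\ne C$; because the selection rule uses only the unoriented data of $C$, which $\iota$ preserves, $\iota\circ\iota=\mathrm{id}$.

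Next I would check that $C$ and $\iota(C)$ make opposite contributions to $P(z,w)$ by verifying three items: first, the monomial $z^i w^j$ is identical, since $L$ is contractible and hence carries trivial homology class, which is invariant under reversal; second, the permutation sign is identical, since both $L$ and its reverse are $\ell$-cycles; third, the product of Kasteleyn entries along $L$ picks up an overall factor $(-1)^\ell=-1$, because each edge $u_i u_{i+1}$ contributes $K_{u_i,u_{i+1}}(z,w)$ in $C$ and $K_{u_{i+1},u_i}(z,w)$ in $\iota(C)$, and the twisted skew-symmetry $K_{v,u}(z,w)=-K_{u,v}(z^{-1},w^{-1})$ produces one minus sign per edge, while the net $z^{\pm 1},w^{\pm 1}$ factors over $L$ cancel by contractibility. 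Hence the two contributions cancel, so configurations carrying a contractible odd loop collectively contribute zero to $P(z,w)$. After cancellation, every configuration contributing to a non-vanishing monomial of $P(z,w)$ has all its odd loops in non-trivial homology classes.

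The main obstacle I anticipate is the sign-and-monomial bookkeeping under loop reversal: some edges of the contractible loop $L$ may cross $\gamma_x$ or $\gamma_y$, so the $z^{\pm 1},w^{\pm 1}$ factors have to be tracked explicitly. Contractibility of $L$ is precisely what guarantees that these factors cancel globally, leaving only the clean $(-1)^\ell=-1$ sign from the twisted skew-symmetry of $K(z,w)$. Making the edge-ordering canonical enough for the involution to be well-defined is a minor bookkeeping point that should cause no difficulty.
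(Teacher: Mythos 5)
Your proof is correct and takes essentially the same approach as the paper: the parity claim via the vertex count (an even total number of vertices, with doubled edges and even loops covering even numbers, forcing evenly many odd loops), and the homology claim via reversing the orientation of a contractible odd loop, which preserves the monomial and the permutation sign while the Kasteleyn product flips by $(-1)^{\ell}=-1$, cancelling terms in pairs. Your explicit edge-ordered involution and the identity $K_{v,u}(z,w)=-K_{u,v}(z^{-1},w^{-1})$ simply formalize the bookkeeping the paper states in one line.
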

\begin{proof}
For any loop configuration on the Fisher graph embedded on a torus,
the number of odd loops is always even. That is because the total
number of vertices are even, while odd loops always involve odd
number of vertices. Any term in $P_{ij}$ including odd loops can
appear only when odd loops have non-trivial homology. That is because for a
contractible odd loop, we can reverse the orientation of that loop to
negate the sign of that term. The term with reversed orientation on
the odd loop cancels with the original term, because the homology
class $(0,0)$ of the configurations is not changed given the odd
loop is contractible.
\end{proof}

\subsection{Generalized Fisher Correspondence}
Consider the Fisher graph obtained by replacing each vertex of the honeycomb lattice by a triangle. Assume all the triangle edges have weight 1, and all the non-triangle edges have positive weights not equal to 1. Furthermore, we assume that at each triangle, there is an even number of adjacent edges with weight less than 1. We introduce a generalized Fisher correspondence between the Ising model on the triangular lattice and the dimer model on the Fisher lattice as follows:
\begin{enumerate}
\item If two adjacent spins have the same sign, and the dual edge of the Fisher lattice has weight strictly greater than 1, then the dual edge separating the two spins is present in the dimer configuration.
\item If two adjacent spins have the same sign, and the dual edge of the Fisher lattice has weight strictly less than 1, then the dual edge separating the two spins is not present in the dimer configuration.
\item If two adjacent spins have the opposite sign, and the dual edge of the Fisher lattice has weight strictly greater than 1, then the dual edge separating the two spins is not present in the dimer configuration.
\item If two adjacent spins have the opposite sign, and the dual edge of the Fisher lattice has weight strictly less than 1, then the dual edge separating the two spins is present in the dimer configuration. If at each triangle, an even number of incident edges have weight less than 1, we change the configuration on an even number of incident edges. As a result, the number of present edges incident to each triangle is still odd, which is a dimer configuration. This correspondence is 2-to-1 since negating the spins at all vertices corresponds to the same dimer configuration.
\end{enumerate}

Around each triangle of the triangular lattice, we always have an even number of sign changes. If all the non-triangular edges have weight strictly greater than 1, then the number of present edges incident to each triangle of the Fisher graph is equal to the number of edges separating the same spins, which is odd. Whenever we have an edge with weight strictly less than 1, we change the configuration of that edge, according to the principle described above.  Figure 9 is an example of the generalized Fisher correspondence given $a>1$, $b>1$, $c<1$, $d>1$, $e<1$.


\begin{figure}[htbp]
  \centering
\scalebox{0.6}[0.6]{\includegraphics{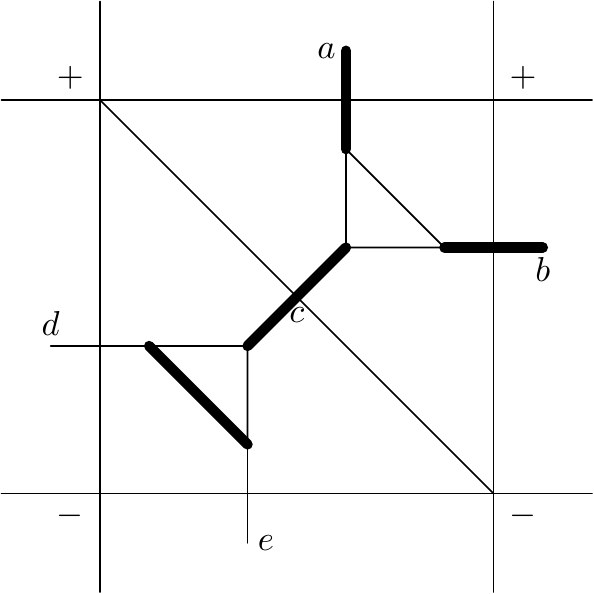}}
   \caption{Generalized Fisher Correspondence}
\end{figure}

Choose the interaction $J_e$ associated to a bond as follows:
\begin{eqnarray*}
J_e=\frac{1}{2}|\log w_e|
\end{eqnarray*}
where $w_e$ is the weight of the dual edge. $J>0$ corresponds to the ferromagnetic interaction.

\subsection{Duality Transformation}
We call a geometric figure built with a certain number of bonds of a
lattice a closed polygon if at every lattice point only an even
number of bonds occurs. It is clear that  every configuration of
``$+$'' and ``$-$'' spins on a lattice can be associated a closed
polygon of the dual lattice in the following way. A dual bond
belongs to the polygon if it separates different spins and does not
belong to the polygon if it separates equal spins. The same closed
polygon is associated to two symmetric configurations in which the
``$+$'' and ``$-$'' spins are interchanged.

Let $T_{mn}$ be the quotient graph of the triangular lattice on the plane, as
defined on Page 4. Let $H_{mn}$ be the dual graph of $T_{mn}$, $H_{mn}$ is a honeycomb lattice which can be embedded into an $m\times n$ torus.  Without loss of generality, assume both $m$ and $n$ are even.

Define an
Ising model on $T_{mn}$ with interactions $\{J_e\}_{e\in E(T_{mn})}$.
Assume the Ising model on $T_{mn}$ has partition function $Z_{T_{mn}, I}$.
Then $Z_{T_{mn},I}$ can be written as, 
\[Z_{T_{mn},I}=2\prod_{e\in E(T_{mn})}\exp(J_e)\sum_{C^*\in S_{00}^*}\prod_{e\in C^*}\exp(-2J_e):=2\prod_{e\in E(T_{mn})}\exp(-J_e)Z_{F_{mn},D_{00}},\]
where $S_{00}^*$ is the set of closed polygon configurations of
$H_{mn}$, with an even number of occupied bonds crossed by both
$\gamma_x$ and $\gamma_y$. The sum is over all configurations in
$S_{00}^*$. Similarly, we can define $S_{01^*}(S_{10}^*,S_{11}^*)$
to be the set of closed polygon configurations of $H_{mn}$, with an
even(odd,odd) number of occupied bonds crossed by $\gamma_x$, and an
odd(even,odd) number of occupied bonds crossed by $\gamma_y$. 

$F_{mn}$ is the Fisher graph obtained by replacing each vertex of $H_{mn}$ by a triangle, with weights on all the non-triangle edges given by
\[w_e=e^{2J_e}\]
Let $Z_{F_{mn}, D}$ be the partition function of dimer configurations on
$F_{mn}$, a Fisher graph embedded into an $m\times n$ torus, with
weights $e^{2J_e}$ on edges of $H_{mn}$, and weight 1 on all the
other edges. Then
\[Z_{F_{mn},D}=Z_{F_{mn},D_{00}}+Z_{F_{mn}, D_{01}}+Z_{F_{mn},D_{10}}+Z_{F_{mn},D_{11}},\]
where
\[Z_{F_{mn},D_{\theta,\tau}}=\prod_{e\in E(T_{mn})}\exp(2J_e)\sum_{C^*\in S_{\theta,\tau}^*}\prod_{e\in C^{*}}\exp(-2J_e).\]
For example, $Z_{F_{mn},D_{01}}$ is the dimer partition function on
$F_{mn}$ with an even number of occupied edges crossed by $\gamma_x$,
and an odd number of occupied edges crossed by $\gamma_y$. It also
corresponds to an Ising model which has the same configuration on
the two boundaries parallel to $\gamma_y$, and the opposite
configurations on the two boundaries parallel to $\gamma_x$. Similar
results hold for all the $Z_{F_{mn}, D_{\theta,\tau}}$,
$\theta,\tau\in\{0,1\}$.

On the other hand, if we consider the high temperature expansion of
the Ising model on $T_{mn}$, we have
\begin{eqnarray*}
Z_{T_{mn},I}&=&\sum_{\sigma}\prod_{e=uv\in
E(T_{mn})}\exp(J_e\sigma_u\sigma_v)\\
&=&\sum_{\sigma}\prod_{e=uv\in E(T_{mn})}(\cosh
J_e+\sigma_u\sigma_v\sinh J_e)\\
&=&\prod_{e=uv\in E(T_{mn})}\cosh J_e\sum_{\sigma}\prod_{e=uv\in
E(T_{mn})}(1+\sigma_u\sigma_v\tanh J_e)\\
&=&\prod_{e=uv\in E(T_{mn})}\cosh J_e\sum_{C\in S}\prod_{e\in
C}2^{mn}\tanh J_e,
\end{eqnarray*}
where $S$ is the set of all closed polygon configurations of $T_{mn}$.
Let $\tilde {F}_{mn}$ be a Fisher graph embedded into an $m\times n$
torus, with weights $\tanh J_e$ on edges of $G_n$, with each vertex of the triangular lattice 
replaced by a gadget, as illustrated in the following Figure.
\begin{figure}[htbp]
  \centering
\includegraphics{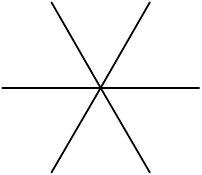}\qquad\includegraphics{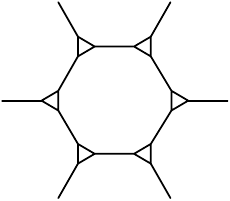}
   \caption{Fisher Correspondence}
\end{figure}

There is an 1-to-2 correspondence between closed polygon configurations on the left graph and the dimer configurations on the right graph. An edge is present in the closed polygon configuration of the left graph if and only if it is present in the dimer configuration of the right graph.
Assume $\tilde{F}_{mn}$ has weight $\tanh J_e$ on edges of $T_{mn}$, and weight 1 on
all the other edges. In other words, the edge with weight $\tanh
J_e$ of $\tilde{F}_{mn}$ and the edge with weight $e^{2J_e}$ of $F_{mn}$
are dual edges. Then we have
\[Z_{T_{mn},I}=2\prod_{e\in E(T_{mn})}\exp(-J_e)Z_{T_{mn},D_{00}}=\prod_{e\in E(T_{mn})}\cosh J_e Z_{\tilde {F}_{mn},D}\]
where $Z_{\tilde{F}_{mn},D}$ is the partition function
of dimer configurations on $\tilde{F}_{mn}$. Hence we have
\[Z_{T_{mn},D_{00}}=\frac{1}{2^{mn+1}}\prod_{e\in
E_{T_{mn}}}(1+\exp(2J_e))Z_{\tilde{F}_{mn},D}.\label{duality}\]

More generally, we can expand all the $Z_{F_{mn},D_{\theta\tau}}$ as
follows:
\begin{equation*}
Z_{F_{mn},D_{\theta,\tau}}=\frac{1}{2^{mn+1}}\prod_{e\in
E_{T_{mn}}}(1+\exp(2J_e))Z_{\tilde{F}_n,D}((-1)^{\tau},(-1)^{\theta}).
\end{equation*}
$Z_{\tilde{F}_{n,D}}(-1,1)$ is the dimer partition function of
$\tilde{F}_n$ with weights of edges crossed by $\gamma_x$ multiplied
by $-1$. Similarly for $Z_{\tilde{F}_{{mn},D}}(1,-1)$ and
$Z_{\tilde{F}_{{mn},D}}(-1,-1)$.
Therefore
\begin{eqnarray*}
Z_{F_{mn},D_{00}}=\max_{\theta,\tau\in\{0,1\}}Z_{F_{mn},D_{\theta,\tau}}
\end{eqnarray*}

Now we consider a Fisher graph $\hat{F}_{mn}$. $\hat{F}_{mn}$ is the same graph as $F_{mn}$ except edge weights.  Namely, $\hat{mn}$ has weight 1 on all the triangle edges. Around each triangle, we have an even number of connecting edges satisfying 
\[\hat{w_e}=\frac{1}{w_e},\]
where $w_e$(resp.\ $\hat{w}_e$) is the weight of edge $e$ for the graph $F_{mn}$(resp.\ $\hat{w}_e$). All the other edge weights satisfy
\[\hat{w_e}=w_e.\]

Without loss of generality, we assume that an even number of edges with weight strictly less than 1 are crossed by $\gamma_x$, and an even number of edges with weight greater than 1 are crossed by $\gamma_y$. Then there is a 1-to-1 correspondence between configurations in $Z_{F_{mn},D_{\theta\tau}}$ and $Z_{\hat{F}_{mn}, D_{\theta\tau}}$ by changing the configurations on all the edges with weight strictly less than 1. Hence we have
\begin{eqnarray*}
Z_{F_{mn},D_{\theta,\tau}}=\prod_{\{e:w_e<1\}}w_e Z_{\hat{F}_{mn},D_{\theta,\tau}}.
\end{eqnarray*}
As a result
\begin{eqnarray}
Z_{\hat{F}_{mn},D_{00}}=\max_{\theta,\tau\in\{0,1\}}Z_{\hat{F}_{mn},D_{\theta,\tau}}\label{maximalpartition}
\end{eqnarray}

\begin{proposition}Assume all the triangle edges have weight 1, and all the non-triangle edges have weight not equal to 1. Assume around each triangle, an even number of edges have weight strictly less than 1. Assume the size of the graph $m$ and $n$ are even, and the number of edges crossed by $\gamma_x$ and $\gamma_y$ with weight strictly less than 1 are both even.  Then $P(z,-1)=0$ have no zeros on the unit circle $\mathbb{T}$. 
\end{proposition}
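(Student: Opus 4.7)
The plan is to adapt the proof of Theorem 1.1 to the one-variable Laurent polynomial $Q(z):=P(z,-1)$, showing it has no zeros on the unit circle $\{|z|=1\}$. I first dispose of the real points $z=\pm 1$. At these points $K(z,-1)$ is skew-symmetric and real, and coincides with the toroidal Kasteleyn matrix $K_{m\times n}^{01}$ (at $z=1$) or $K_{m\times n}^{11}$ (at $z=-1$), so $Q(1)=(\mathrm{Pf}\,K^{01})^2$ and $Q(-1)=(\mathrm{Pf}\,K^{11})^2$. Expanding each Pfaffian as a signed sum $\sum_h\pm Z_{D_h}$ over homology classes (with exactly one negative sign), hypothesis \ref{maximalpartition} guarantees that the negated term is bounded by the positive $Z_{D_{00}}$ contribution, so the signed sum dominates the strictly positive sum of the two remaining partition functions. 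Hence $Q(\pm 1)>0$ strictly.

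Next I rule out non-real zeros. Suppose for contradiction $Q(z_0)=0$ for some non-real $z_0=e^{i\alpha_0\pi}\in\mathbb{T}$ with $\alpha_0\in(0,1)\cup(1,2)$. Classify $\alpha_0$ into the three cases of Theorem 1.1. In Cases 1 and 2 pick $\ell_k\to\infty$ with $z_0^{\ell_k}\to -1$ and write $\alpha_k:=\arg(z_0^{\ell_k})/\pi\to 1$. Enlarging the $x$-period by $\ell_k$ and applying the Fourier-diagonalization formula
\[
P_{\ell_k m\times n}(Z,w)=\prod_{u^{\ell_k}=Z}P(u,w)
\]
gives $P_{\ell_k m\times n}(z_0^{\ell_k},-1)=0$ since $z_0$ is one of the $\ell_k$-th roots of $z_0^{\ell_k}$. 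A key observation is that the resultant form of this product shows the $Z$-exponent of $P_{\ell_k m\times n}(Z,-1)$ remains bounded by $m$, independent of $\ell_k$; this is the analog of the $j\le m$ bound used in the proof of Theorem 1.1. Writing
\[
P_{\ell_k m\times n}(e^{i\theta},-1)=\sum_{0\leq j\leq m}P_j^{(\ell_k)}(e^{ij\theta}+e^{-ij\theta})
\]
and Taylor-expanding about $\theta=\pi$ yields the identity
\[
1=\frac{1}{P_{\ell_k m\times n}(-1,-1)}\left|\sum_{t\geq 1}\frac{[\pi(\alpha_k-1)]^{2t}}{(2t)!}\left.\partial_\theta^{2t}P_{\ell_k m\times n}(e^{i\theta},-1)\right|_{\theta=\pi}\right|.
\]

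Bounding the $2t$-th derivative by $2m^{2t}\sum_j|P_j^{(\ell_k)}|$, the right-hand side is dominated by
\[
\frac{\sum_j|P_j^{(\ell_k)}|}{P_{\ell_k m\times n}(-1,-1)}\cdot 2\sum_{t\ge 1}\frac{(m\pi|\alpha_k-1|)^{2t}}{(2t)!}.
\]
I then need, in exact analogy with Theorem 1.1, two uniform estimates: (i) $\sum_j|P_j^{(\ell_k)}|\leq C^m (Z^{\mathrm{tot}}_{\ell_k m\times n})^2$ via the Lemma 2.2 path-change argument (converting odd-loop configurations into even-loop ones in a fundamental strip around $\gamma_y$); and (ii) $P_{\ell_k m\times n}(-1,-1)\geq c(Z^{\mathrm{tot}}_{\ell_k m\times n})^2$ with $c>0$ independent of $\ell_k$, coming from the real-case Pfaffian identity of Step 1 applied to the enlarged graph (whose triangle and $\gamma_x,\gamma_y$ parity conditions are preserved by $x$-enlargement). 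Granted these, the right-hand side tends to $0$ as $\alpha_k\to 1$, contradicting the identity. Case 3 reduces to Cases 1--2 via the additional prime-enlargement device used at the end of Theorem 1.1.

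The \textbf{main obstacle} is securing estimate (ii) with $c>0$ uniform in $\ell_k$. Writing $P_{\ell_k m\times n}(-1,-1)^{1/2}=Z^{\mathrm{tot}}-2Z_{D_{11}}$ (the $-1$ sign in the Pfaffian expansion of $\mathrm{Pf}\,K^{11}$ being on the $(1,1)$-homology class), the bound reduces to showing $Z_{D_{11}}/Z^{\mathrm{tot}}$ stays strictly below $1/2$ uniformly in $\ell_k$. Since all four partition functions $Z_{D_{\theta\tau}}$ share the same free energy per fundamental domain (ratios are $O(1)$ by \ref{maximalpartition}), and $Z_{D_{10}},Z_{D_{01}}$ retain a strictly positive fraction of $Z^{\mathrm{tot}}$ in the thermodynamic limit (as witnessed by explicit configurations realizing each homology class in each fundamental domain), the required uniform bound follows.
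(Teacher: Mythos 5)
Your outline reproduces the paper's strategy almost step for step (Pfaffian identities plus the maximality relation (\ref{maximalpartition}) at the real points; enlargement of the fundamental domain, the degree-in-$z$ bound, Taylor expansion, and the Lemma 4.2 path-change estimate for $\sum_j|P_j^{(\ell_k)}|$ at non-real points), with two cosmetic deviations: you expand at $\theta=\pi$ with denominator $P(-1,-1)$ where the paper expands at $\theta=0$ with denominator $P(1,-1)$, and you import the full three-case rational analysis of Theorem 1.1, whereas the paper dispatches \emph{all} rational $\alpha_0=p/q$ in one line — enlarge by $q$, so that $z_0^{q}=\pm 1$ is real, and real zeros are already excluded since here \emph{both} $\mathrm{Pf}\,K(1,-1)$ and $\mathrm{Pf}\,K(-1,-1)$ are positive (in Theorem 1.1 only $P(-1)$ was controlled, which is what forced the case analysis; your extra machinery is harmless but unnecessary).

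The genuine gap is exactly the step you flag as the main obstacle, estimate (ii), and your proposed resolution of it does not work. Maximality (\ref{maximalpartition}) gives only the one-sided bounds $Z_{\theta\tau}\leq Z_{00}$; it does \emph{not} make the ratios $Z_{\theta\tau}/Z_{00}$ bounded below, so the claim ``ratios are $O(1)$ by maximality'' is false in the direction you need. Likewise, equality of free energies per fundamental domain (even granted) is compatible with $(Z_{01}+Z_{10})/Z^{\mathrm{tot}}\rightarrow 0$ as $\ell_k\rightarrow\infty$, and exhibiting one explicit configuration in each homology class gives positivity of each $Z_{\theta\tau}$, not a uniformly positive fraction — the fraction could a priori decay in $\ell_k$, destroying the uniform constant $c$. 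The paper closes precisely this hole with a concrete combinatorial injection: divide the $m\times \ell_k n$ torus into circles of fixed circumference $m$, fix one circle $\mathcal{C}_0$, and rotate the dimer configuration to the alternating edges along $\mathcal{C}_0$ while freezing everything outside; this maps $Z_{00}$-configurations to $Z_{01}$-configurations (and $Z_{10}$ to $Z_{11}$) with multiplicity and weight distortion bounded by a constant $C^m$ depending only on the fixed transverse size $m$, not on $\ell_k$. This yields $(Z_{00}+Z_{01}-Z_{10}+Z_{11})^2\leq C^m(Z_{01}+Z_{11})^2$, which together with $\mathrm{Pf}\,K(1,-1)\geq Z_{01}+Z_{11}$ (from $Z_{00}\geq Z_{10}$) controls the denominator uniformly. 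Your variant at $z=-1$ would need the analogous bound $Z^{\mathrm{tot}}\leq C(Z_{01}+Z_{10})$, obtainable from the same rotation device ($Z_{00}\leq C Z_{01}$, $Z_{11}\leq C Z_{10}$), but as written your (ii) rests on assertions that are either unproved or wrong.
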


\begin{proof} When both $m$ and $n$ are even, we have
\begin{eqnarray*}
Pf K(1,1)&=&Z_{00}-Z_{01}-Z_{10}-Z_{11}\\
Pf K(1,-1)&=&Z_{00}+Z_{01}-Z_{10}+Z_{11}\\
Pf K(-1,1)&=&Z_{00}-Z_{01}+Z_{10}+Z_{11}\\
Pf K(-1,-1)&=&Z_{00}+Z_{01}+Z_{10}-Z_{11}
\end{eqnarray*}
By (\ref{maximalpartition}), $Pf K(1,-1)>0$, $Pf K(-1,1)>0$, $Pf K(-1,-1)>0$, given all the edge weights are strictly positive. Hence $P(z,-1)$ have no real roots on $\mathbb{T}$.

  Assume
$P(z)=0$ has a non-real zeros $z_0\in\mathbb{T}^2$. Assume
\begin{eqnarray*}
z_0=e^{\sqrt{-1}\alpha_0\pi},\qquad \alpha_0\in(0,1)\cup(1,2)
\end{eqnarray*}
If $\alpha_0$ is rational, namely $a_0=\frac{p}{q}$, then after enlarging the fundamental domain to $m\times qn$, $P_{q}(z^q,-1)=0$, while $z^q$ is real, which is a impossible.

 Now let us consider the case of irrational $\alpha_0$. There exists a sequence
 $\ell_k\in\mathbb{N}$, such that
 \begin{eqnarray*}
 \lim_{k\rightarrow\infty}z_0^{\ell_k}=1
 \end{eqnarray*}
 In other words, if we assume
 $z_0^{\ell_k}=e^{\sqrt{-1}\alpha_k\pi}$ where
 $\alpha_k\in(-1,1)$, then
 \begin{eqnarray*}
 \lim_{k\rightarrow\infty}\alpha_k=0.
 \end{eqnarray*}
 According to the formula of enlarging the fundamental domain,
 \[P_{\ell_k}(z_0^{\ell_k},-1)=0\qquad \forall k\]
 
 By (\ref{maximalpartition}),
 \[P(1,-1)=(Z_{00}+Z_{01}-Z_{10}+Z_{11})^2\geq (Z_{01}+Z_{11})^2\]
 Therefore we have
 \begin{eqnarray}
 1&\leq&\lim_{k\rightarrow\infty}\left|\frac{P_{\ell_k}(z_0^{\ell_k},-1)-P_{\ell_k}(1,-1)}{(Z_{\ell_k,01}+Z_{\ell_k,11})^2}\right|\label{biginequality}
 \end{eqnarray}
 On the other hand
 \begin{eqnarray}
&& \lim_{k\rightarrow\infty}\left|\frac{P_{\ell_k}(z_0^{\ell_k},-1)-P_{\ell_k}(1,-1)}{(Z_{\ell_k,01}+Z_{\ell_k,11})^2}\right|\\
 &=&\lim_{k\rightarrow\infty}\frac{1}{(Z_{\ell_k,01}+Z_{\ell_k,11})^2}|P_{\ell_k}(e^{\sqrt{-1}\alpha_k\pi})-P_{\ell_k}(e^{\sqrt{-1}\pi})|\\
 &=&\lim_{k\rightarrow\infty}\frac{1}{(Z_{\ell_k,01}+Z_{\ell_k,11})^2}\left|\sum_{t=1}^{\infty}\frac{[\pi\alpha_k]^{2t}}{(2t)!}\frac{\partial^{2t}P_{\ell_k}(e^{\sqrt{-1}\theta})}{\partial\theta^{2t}}|_{\theta=0}\right|\label{approximation}
 \end{eqnarray}
Since
\begin{eqnarray*}
P_{\ell_k}(z,-1)=\sum_{0\leq j\leq m}P_{j}^{(\ell_l)}(z_j+\frac{1}{z^j})
\end{eqnarray*}
where $P_j^{(\ell_k)}$ is the signed sum of loop configurations winding exactly $j$ times around the cylinder (see Lemma 2.1), we have
\begin{eqnarray*}
\frac{\partial^{2t}P_{\ell_k}(e^{\sqrt{-1}\theta})}{\partial\theta^{2t}}|_{\theta=0}=\sum_{1\leq j\leq m}2j^{2t}(-1)^tP_j^{(\ell_k)},
\end{eqnarray*}
and
\begin{eqnarray*}
\lim_{k\rightarrow\infty}\frac{1}{(Z_{\ell_k,01}+Z_{\ell_k,11})^2}\left|\sum_{t=1}^{\infty}\frac{(\pi\alpha_k)^{2t}}{(2t)!}\frac{\partial^{2t}P_{\ell_k}(e^{\sqrt{-1}\theta})}{\partial\theta^{2t}}|_{\theta=0}\right|\\
\leq\lim_{k\rightarrow\infty}\frac{1}{(Z_{\ell_k,01}+Z_{\ell_k,11})^2}\sum_{t=1}^{\infty}\frac{(\pi m\alpha_k)^{2t}}{(2t)!}2\sum_{1\leq j\leq m}|P_j^{(\ell_k)}|
\end{eqnarray*}

Using the same technique as described in the proof of Theorem 1.2, we have
\begin{eqnarray*}
\sum_{1\leq j\leq m}|P_j^{(\ell_k)}|\leq \mathrm{Partition\ of\ configurations\ including\ nonplanar\ odd\ loops}\\+\mathrm{Partition\ of\ configurations\ with\ only\ even\ loops}\\
\leq(2^mC_1^{6m}+1)\mathrm{Partition\ of\ even\ loop\ configurations}\leq C_2^mZ_{\ell_k}^2
\end{eqnarray*}

Moreover, there is a one-to-one correspondence between dimer configurations in $Z_{00}$ and $Z_{01}$, similarly between dimer configurations in $Z_{10}$ and $Z_{11}$. We divide the $m\times \ell_k n$ torus in to $\ell_k n$ circles, each circle has circumference $m$. Fix one circle $\mathcal{C}_0$, and fix configurations out side the circle and on the boundary of the circle. For each dimer configuration in $Z_{00}$, if we rotate the configuration to alternating edges along $C_0$, we get a dimer configuration in $Z_{01}$. An example of such an transformation is illustrated in the Figures 11 and 12.

\begin{figure}[htbp]
\centering
\scalebox{0.8}[0.8]{\includegraphics{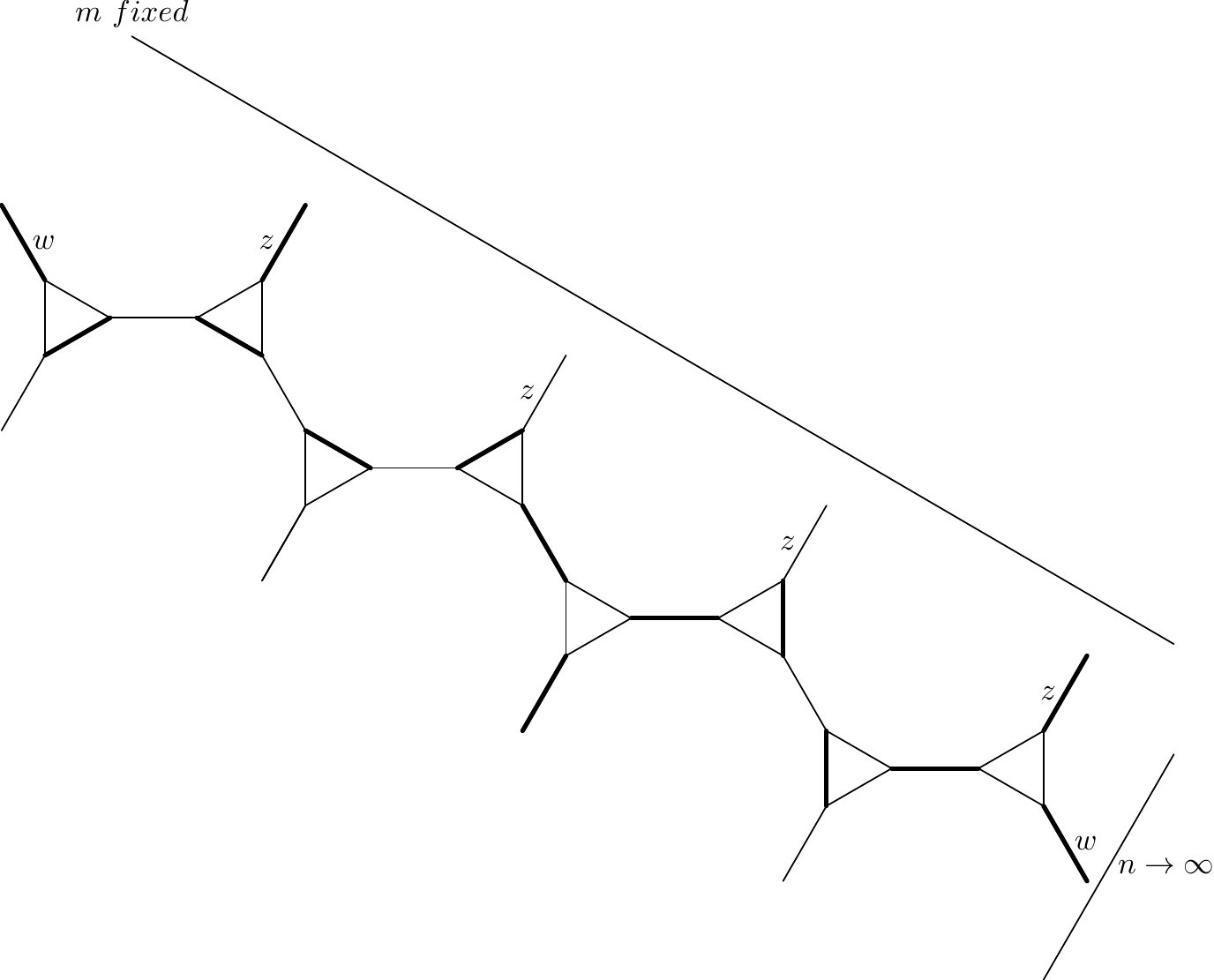}}
\caption{}
\end{figure}

\begin{figure}[htbp]
\centering
\scalebox{0.8}[0.8]{\includegraphics{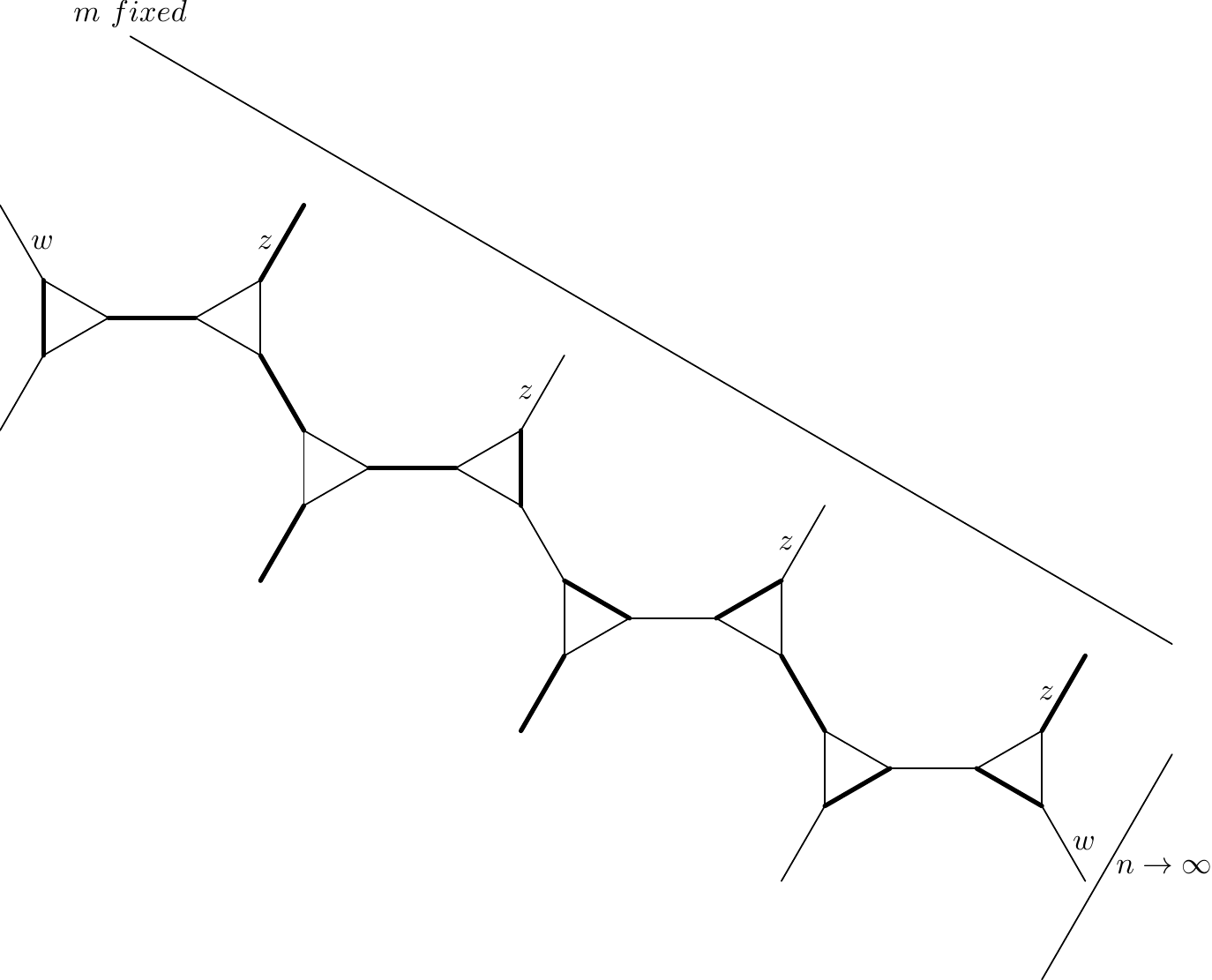}}
\caption{}
\end{figure}

As a result,
\begin{eqnarray*}
(Z_{\ell_k,00}+Z_{\ell_k,01}-Z_{\ell_k,10}+Z_{\ell_k,11})^2\leq C^m (Z_{\ell_k,01}+Z_{\ell_k,11})^2
\end{eqnarray*}
where $C$ is a constant independent of $k$. Hence
\begin{eqnarray*}
&&\lim_{k\rightarrow\infty}\frac{1}{(Z_{\ell_k,01}+Z_{\ell_k,11})^2}\left|\sum_{t=1}^{\infty}\frac{(\pi\alpha_k)^{2t}}{(2t)!}\frac{\partial^{2t}P_k(e^{\sqrt{-1}\theta})}{\partial\theta^{2t}}|_{\theta+0}\right|\\
&\leq&\lim_{k\rightarrow\infty}\frac{1}{(Z_{\ell_k,01}+Z_{\ell_k,11})^2}\sum_{t=1}^{\infty}\frac{(m\pi\alpha_k)^{2t}}{(2t)!}C_2^mZ_{\ell_k}^2\\
&\leq&\lim_{k\rightarrow\infty}C_3^m\sum_{t=1}^{\infty}\frac{(m\pi\alpha_k)^{2t}}{(2t)!}
\end{eqnarray*}
Since $m$ is a constant, and $\lim_{k\rightarrow\infty}\alpha_k=0$, we have
\begin{eqnarray*}
\lim_{k\rightarrow\infty}\sum_{t=1}^{\infty}\frac{(m\pi\alpha_k)^{2t}}{(2t)!}C_3^m=0
\end{eqnarray*}
which is a contradiction to (\ref{approximation}). Therefore the proposition follows.

\end{proof}

\begin{lemma}$P(z,w)\geq 0,\qquad\forall (z,w)\in\mathbb{T}^2$.
\end{lemma}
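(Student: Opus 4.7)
The plan is to reduce the non-negativity of $P(z,w)$ on $\mathbb{T}^2$ to the bipartite case, by invoking the correspondence between the Kasteleyn operator on the Fisher graph and the Kasteleyn operator on the square-octagon lattice (referenced in \cite{d} and worked out in detail in the remainder of this section). Bipartite Kasteleyn matrices have a determinant that is manifestly a modulus squared, hence non-negative; transporting this identity back to the Fisher graph yields the desired inequality.

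Concretely, the first step is to establish a determinantal identity of the form
\[
\det K_F(z,w) \;=\; \rho(z,w)\cdot \det K_{SO}(z,w),
\]
where $K_{SO}(z,w)$ is the $(z,w)$-twisted Kasteleyn operator on the square-octagon graph (with positive edge weights derived from those on $F_{mn}$), and $\rho(z,w)$ is a real, strictly positive Laurent polynomial on $\mathbb{T}^2$ accounting for the local contractions of triangles (whose edges carry weight $1$ by hypothesis). This identity would be obtained through a sequence of local gauge and urban-renewal-type moves compatible with the Kasteleyn orientation. One must verify that the moves correctly transport the $(z,w)$-twist across the edges in $E_H \cup E_V$ crossed by $\gamma_x,\gamma_y$, and that the resulting oriented square-octagon lattice carries a valid clockwise-odd orientation.

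The second step exploits bipartiteness. The square-octagon lattice admits a balanced two-colouring $V(SO_{mn}) = B \sqcup W$ with $|B|=|W|$. Ordering black vertices first and using the identity $K_{SO}(z,w)^{*}=-K_{SO}(z,w)$ on $\mathbb{T}^2$ (which follows, as in the Fisher case, from antisymmetry together with $\bar z = 1/z$, $\bar w = 1/w$), the matrix takes the block form
\[
K_{SO}(z,w) \;=\; \begin{pmatrix} 0 & A(z,w) \\ -A(z,w)^{*} & 0 \end{pmatrix},
\]
and hence
\[
\det K_{SO}(z,w) \;=\; \lvert \det A(z,w) \rvert^{2} \;\geq\; 0.
\]
Combining with the identity of the first step gives $P(z,w) = \rho(z,w)\,\lvert \det A(z,w) \rvert^{2} \geq 0$ on $\mathbb{T}^2$, as desired.

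The main obstacle is the first step: the reduction from Fisher to square-octagon must be carried out with the $(z,w)$-twist in place, and one must track carefully how each local contraction interacts with the crossing signs on $\gamma_x, \gamma_y$. In particular, verifying that $\rho(z,w)$ is \emph{strictly positive} on $\mathbb{T}^2$ (and not merely nonzero real) requires an explicit accounting of the Kasteleyn signs introduced by each triangle contraction. Once this bookkeeping is done, the bipartite block structure delivers the conclusion essentially for free.
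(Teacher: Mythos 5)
Your second step is sound linear algebra: on $\mathbb{T}^2$ one indeed has $K(z,w)^{*}=-K(z,w)$, and for a bipartite graph the block form yields $\det K_{SO}(z,w)=|\det A(z,w)|^{2}\geq 0$. But the entire burden of the proof sits in your first step, and the identity you posit there, $\det K_F(z,w)=\rho(z,w)\det K_{SO}(z,w)$ with $\rho$ strictly positive on $\mathbb{T}^2$, is neither proved nor available from the cited source. The correspondence in \cite{d} that this paper invokes is a bosonization identity relating the square-octagon dimer model to \emph{two independent copies} of the Ising model; the natural polynomial identity is therefore of the shape $P_{SO}(z,w)\doteq \rho(z,w)\,P_F(z,w)^{2}$, which is precisely why the paper claims only that the two polynomials have the \emph{same zero locus} and never asserts proportionality. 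Under the squared relation, $\det K_{SO}\geq 0$ is automatic and carries no information about the sign of $P_F$: your argument would conclude $\rho\,P_F^{2}\geq 0$, a tautology. There is also a structural warning sign for your ``urban-renewal'' plan: the Fisher graph is non-bipartite (each triangle is an odd cycle), so a chain of determinant-preserving local moves onto a bipartite graph with a positive factor would, by your step two, prove $P_F\geq 0$ with no hypotheses at all --- yet your proposal nowhere uses the hypotheses of the lemma (triangle edges of weight $1$, an even number of edges of weight less than $1$ around each triangle, the parity conditions along $\gamma_x,\gamma_y$), and those hypotheses are exactly what the paper's proof consumes. A reduction that ignores them cannot be correct as stated.

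The paper's own route is entirely different and essentially topological. Proposition 4.3 --- proved combinatorially via the odd-loop path-change argument and enlargement of the fundamental domain, and it is there that the ferromagnetic parity hypotheses enter through $Z_{00}=\max_{\theta,\tau}Z_{\theta\tau}$ --- shows that $P(z,-1)$ has no zeros on $\mathbb{T}$, and likewise $P(-1,w)>0$. If $P(e^{i\theta_0},e^{i\phi_0})<0$ at an interior point of $[-\pi,\pi]^2$, continuity produces, for every $\phi$ in a small interval around $\phi_0$, a zero of $P(\cdot,e^{i\phi})$ on the segment joining $\theta_0$ to $\pi$; choosing $\phi=\frac{q}{p}\pi$ rational with $q$ odd and enlarging the fundamental domain transports this zero to a zero of $P_p(\cdot,-1)$ on $\mathbb{T}$, contradicting Proposition 4.3. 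In the paper, the square-octagon correspondence is used only afterwards (Section 4.4), and only for what it can actually deliver: the Harnack bound on the \emph{number} of zeros on $\mathbb{T}^2$. The \emph{sign} of $P$ must come from a separate argument such as the one above; your proposal, as written, has no mechanism to supply it.
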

\begin{proof}Assume there exists $(z_0,w_0)\in\mathbb{T}^2$, such that $P(z_0,w_0)<0$, since $P(z,-1)>0$ and $P(-1,w>0)$, if we consider $z=e^{i\theta},w=e^{i\phi}$, $z_0=e^{i\theta_0}$, $w_0=e^{i\phi_0}$, and $(\theta,\phi)\in[-\pi,\pi]^2$, on the boundary of the domain $[-\pi,\pi]^2$, $P(e^{i\theta},e^{i\phi})$ are strictly positive, while $P(e^{i\theta_0},e^{i\phi_0})<0$, by continuity there exists a neighborhood $O_h=(\phi_0-h,\phi_0+h)$, such that for any , such that for any $\phi\in O_h$, $P(e^{i\theta_0},e^{i\phi})<0$. Consider  straight lines in $[-\pi,\pi]^2$, connecting $(\theta_0,\phi)$ to $(\pi,\phi)$, namely
\begin{eqnarray*}
\gamma_{\phi}(t)=((\pi-\theta_0)t+\theta_0,\phi)
\end{eqnarray*}
for $\phi\in(\phi_0-h,\phi_0+h)$, there exists $t_{\phi}$, such that
\begin{eqnarray*}
P(e^{i[(\pi-\theta_0)t_{\phi}+\theta_0]},e^{i\phi})=0
\end{eqnarray*}
By continuity, there exists a $\phi$ in the open interval $(\phi_0-h,\phi_0+h)$ such that $\frac{\phi}{\pi}$ a  of -1. For instance $\phi=\frac{q}{p}\pi$ in reduced form where $q$ is odd. Then after enlarging the fundamental domain to $p m\times n$, we have $P(e^{i[(\pi-\theta_0)t_{\phi}+\theta_0]},-1)=0$, which is a contradiction.

\end{proof}

\subsection{Proof of Theorem 1.2}

In this section, we prove that the spectral curve of Fisher graphs associated to any periodic, ferromagnetic Ising model is a Harnack curve, and its intersection with the unit torus $\mathbb{T}^2=\{(z,w):|z|=1,|w|=1\}$ is either empty or a single real intersection of multiplicity 2.

First of all, it is proved in \cite{d} that the Ising characteristic polynomial  $P(z,w)$ has the same zero locus as the characteristic polynomial of the square-octagon lattice, denoted by $P_C(z,w)$, with suitable clock-wise odd orientation. $P_{C}(z,w)$ is the characteristic polynomial of a bipartite graph with positive edge weights and clockwise-odd orientation, it is proved in \cite{ko} that $P_{C}(z,w)=0$ is a Harnack curve, i.e, for any fixed $x,y>0$ the number of zeros of $P_{C}(z,w)$ on the torus $\mathbb{T}_{x,y}=\{(z,w):|z|=x,|w|=y\}$ is at most 2 (counting multiplicities). Hence the number of zeros of $P_{\mathcal{F}}(z,w)$ on the unit torus is at most 2. Since $P_{\mathcal{F}}(z,w)\geq 0$, $\forall (z,w)\in \mathbb{T}^2$, each zero $P_{\mathcal{F}}(z,w)$ on $\mathbb{T}^2$ is at least of multiplicity 2. If $P_{\mathcal{F}}(z,w)$ have a non-real zero $(z_0,w_0)$  on $\mathbb{T}^2$, then $(\bar{z}_0,\bar{w}_0)\neq (z_0,w_0)$ is also a zero of $P_{\mathcal{f}}(z,w)$, and each of them is at least of multiplicity 2. Then $P_{\mathcal{F}}(z,w)$ has at least 4 zeros on $\mathbb{T}^2$, which is a contradiction to the fact that $P_{\mathcal{F}}(z,w)=0$ is a Harnack curve. Hence the intersection of $P_{\mathcal{F}}(z,w)=0$ with $\mathbb{T}^2$ is either empty or a single real intersection of multiplicity 2.

\begin{example}($1\times n$ fundamental domain)
Assume the Fisher graph has period $1\times n$, and all edge weights
are strictly positive. The intersection of $P(z,w)=0$ with
$\mathbb{T}^2$ is either empty or a single real node. That is, they
intersect at one of the point $(\pm1,\pm1)$ and the intersection is
of multiplicity 2.
\end{example}
\begin{proof}
Without loss of generality, assume $P(z,w)$ is linear with respect
to $z$ and $\frac{1}{z}$, then
all terms in $P(z,w)$ fall into two categories\\
$\rmnum{1})$ occupy $z$-edge exactly once\\
$\rmnum{2})$ does not occupy $z$-edge or occupy $z$-edge twice\\
$1\times n$ fundamental domain is comprised of $n\ 1\times 1$
blocks, for each block, we give weights
$a_{i1},b_{i1},c_{i1},a_{i2},b_{i2},c_{i2}$ for triangle edges,
assume $a_{i}=a_{i1}a_{i2}$, $b_{i}=b_{i1}b_{i2}$,
$c_{i}=c_{i1}c_{i2}$, $1\leq i\leq n$. See Figure 6. Obviously such
an arrangement of edge weights is gauge equivalent to the
arrangement giving all triangle edges weight 1 and non-triangle
edges positive edge weights.

\begin{figure}[htbp]
  \centering
\scalebox{0.8}[0.8]{\includegraphics{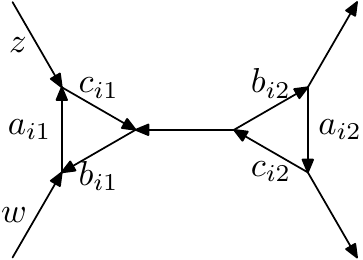}}
   \caption{Fisher Graph on a Cylinder}
\end{figure}

Each configuration in case $\rmnum{1}$) consists of a single even
loop and some double edges, the partition of all configurations in
$\rmnum{1}$ is
\begin{eqnarray*}
P_1=-\prod_{i=1}^{n}[(a_i-b_ic_i)w+(c_i-a_ib_i)]\frac{1}{z}-\prod_{i=1}^{n}[(a_i-b_ic_i)\frac{1}{w}+(c_i-a_ib_i)]z
\end{eqnarray*}
Configurations in case $\rmnum{2}$) depend on configurations of each
$1\times 1$ block, which are determined by configurations of
boundary edges. Let $V_{i}^{0,0}$ denote the partition at $i-th$
block when both edges in $z$ direction are unoccupied, $V_{i}^{2,0}$
denote the partition at $i-th$ block when left edge in $z$ direction
is occupied twice, while right edge is unoccupied, similarly for
$V_{i}^{0,2}$ and $V_{i}^{2,2}$. Then we have
\begin{eqnarray*}
V_{i}^{0,0}&=&a_i^2+c_i^2+a_ic_i(w+\frac{1}{w})\\
V_{i}^{0,2}&=&a_{i1}c_{i1}b_{i2}(\frac{1}{w}-w)\\
V_{i}^{2,0}&=&b_{i1}a_{i2}c_{i2}(w-\frac{1}{w})\\
V_{i}^{2,2}&=&b_{i}^2-b_{i}(w+\frac{1}{w})+1
\end{eqnarray*}
Let $k_{i}\in \{0,2\}$, $1\leq i\leq n$, then partition of all
configurations in $\rmnum{2}$ is
\begin{eqnarray*}
P_{0,2}=\sum_{k_1,...,k_n}\prod_{i=1}^{n}V_{i}^{k_i,k_{i+1}}
\end{eqnarray*}
Assume $w=e^{\sqrt{-1}\phi}$, then $V_{i}^{0,0}\geq 0$,
$V_{i}^{2,2}\geq 0$. Periodicity implies that $V_i^{0,2}$ and
$V_{i}^{2,0}$ always appear in pairs, then all terms in $P_{0,2}$
where $\sin\phi$ appears depend only on $\sin^2\phi$, moreover,
given all edge weights are positive, each term with $\sin\phi$ is
nonnegative, therefore
\begin{eqnarray*}
P(z,w)&=&P_1+P_{0,2}\\
&=&-\prod_{i=1}^{n}[(a_i-b_ic_i)w+(c_i-a_ib_i)]\frac{1}{z}-\prod_{i=1}^{n}[(a_i-b_ic_i)\frac{1}{w}+(c_i-a_ib_i)]z\\
&&+\prod_{i=1}^{n}[a_i^2+c_i^2+a_ic_i(w+\frac{1}{w})]+\prod_{i=1}^{n}[b_{i}^2-b_{i}(w+\frac{1}{w})+1]+F(w)
\end{eqnarray*}
where $F(w)\geq 0$. Assume $G(z,w)=z[P(z,w)-F(w)]$. Fix $w$,
$G(z,w)$ is a quadratic polynomial with respect to $z$. Assume
\begin{eqnarray*}
C_{0}&=&\prod_{i=1}^{n}[(a_i-b_ic_i)\frac{1}{w}+(c_i-a_ib_i)]\\
C_{1}&=&\prod_{i=1}^{n}[a_i^2+c_i^2+a_ic_i(w+\frac{1}{w})]+\prod_{i=1}^{n}[b_{i}^2-b_{i}(w+\frac{1}{w})+1]
\end{eqnarray*}
then the root for $G(z,w)=0$ will be
\begin{eqnarray*}
z_{1,2}=\frac{-C_1\pm(C_1^2-4|C_0|^2)^{\frac{1}{2}}}{2C_0}
\end{eqnarray*}
On the other hand,
\begin{eqnarray*}
C_1^{2}-4|C_0|^2\geq
4[\prod_{i=1}^{n}(a_i^2+c_i^2+2a_ic_i\cos\phi)(b_i^2+1-2b_i\cos\phi)\\
-\prod_{i=1}^{n}[(a_i-b_ic_i)^2+(c_i-a_ib_i)^2+2(a_i-b_ic_i)(c_i-a_ib_i)\cos\phi]]
\end{eqnarray*}
\begin{eqnarray*}
(a_i^2+c_i^2+2a_ic_i\cos\phi)(b_i^2+1-2b_i\cos\phi)-[(a_i-b_ic_i)^2+(c_i-a_ib_i)^2+2(a_i-b_ic_i)(c_i-a_ib_i)\cos\phi]]\\
=4a_ib_ic_i\sin^2\phi\geq 0
\end{eqnarray*}
Therefore, $C_1^2-4|C_0|^2\geq 0$. Given $a_i,b_i,c_i>0$, equality
holds only if $w$ is real. If $C_1^2-4|C_0|>0$, then $|z_{1,2}|\neq
1$, $G(z,w)$ has no zeros on $\mathbb{T}^2$. Since $G(1,1)=\det
K(1,1)>0$, $P(z,w)-F(w)>0$ on $\mathbb{T}^2$, so $P(z,w)>0$ on
$\mathbb{T}^2$. If $C_1^2-4|C_0|^2=0$, then $w$ is real, $z_1=z_2$
are real, $P(z,w)$ has real node on $\mathbb{T}^2$.
\end{proof}

\end{document}